\def\SYoung#1{\vbox{\smallskip\offinterlineskip
    \halign{&\vbox{##}\kern-\SThickness\cr #1}}}
\newdimen\SSquaresize \SSquaresize=4.5pt
\newdimen\SThickness \SThickness=.15pt
\newdimen\SCorrection \SCorrection=7pt
\def\SCarre#1{\hbox{\vrule width \SThickness
   \vbox to \SSquaresize{\hrule height \SThickness\vss
      \hbox to \SSquaresize{\hss$\scriptstyle#1$\hss}
   \vss\hrule height\SThickness}
   \unskip\vrule width \SThickness}
   \kern-\SThickness}
\makeatletter \@addtoreset{equation}{section}
\newtheorem{theorem}{Theorem}[section]
\newtheorem{lemma}[theorem]{Lemma}
\newcounter{mmacnt}
\def\restartmma{\setcounter{mmacnt}{0}}
\def\mma@slash{/}
\def\mma{@}\makeatletter\let\mma@at\mma\let\mma\undefined
\def\mma@B{\@ifnextchar.{\ifmmode\mathop{\;\mma@slash\!.\;}\else\mma@slash\!.\fi\mma@eat}{\mma@C}}
\def\mma@C{\expandafter\@ifnextchar\mma@at{\ifmmode\mathop{\;\mma@slash\kern-.25ex @\;}\else\mma@slash\!@\fi\mma@eat}{\mma@D}}
\def\mma@D{\@ifnextchar;{\ifmmode\mathop{\;\mma@slash\!;\;}\else\mma@slash\!;\fi\mma@eat}{\mma@slash}}
\def\mma@eat#1{}
\newenvironment{mma}{%
 \par\smallskip
 \catcode`|=\active\catcode`/=\active
 \parskip=0pt\parindent=0pt 
 \small
 \def\In##1\\{%
   \def\linebreak{\hfil\break\null\qquad}%
   \refstepcounter{mmacnt}%
   \hangindent=2.5em\hangafter=0
   \leavevmode
   \llap{\tiny\sffamily In[\arabic{mmacnt}]:=\kern.5em}%
   \mathversion{bold}$\displaystyle##1$
   \mathversion{normal}\par\kern-\lineskip
 }%
 \def\Print##1\\{%
   \def\linebreak{\hfil\break}%
   \hangindent=2.5em\hangafter=0
   \leavevmode ##1\par}%
 \def\Out{\@ifnextchar[{\mma@out}{\mma@out[@]}}%
 \def\mma@out[##1]##2\\{%
   \def\linebreak{\hfil\break\null}%
   \kern\abovedisplayskip\par
   \hangindent=2.5em\hangafter=0
   {\kern\lineskip\advance\lineskip2pt\kern-\lineskip
   \leavevmode
   \llap{\tiny\sffamily Out[\arabic{mmacnt}]=\kern.5em}%
   \ifx ##1@\else
     \rlap{\kern\hsize\kern-2.5em\llap{(##1)}}%
   \fi
   $\displaystyle##2$\hfil\null
   \par\kern-\lineskip}%
   \kern\lineskip
   \kern\belowdisplayskip
 }%
 \def\Warning##1##2\\{%
   \def\linebreak{\hfil\break}%
   \hangindent=2.5em\hangafter=0
   \leavevmode
   {\scriptsize##1 : ##2}\par}%
}{%
}
\newcommand{\refOut}[1]{{\small\sffamily Out[#1]}}
\def\MLabel#1{{\refstepcounter{mmacnt}\label{#1}}\addtocounter{mmacnt}{-1}}
\title[Asymptotic normality in Baxter permutations]{Asymptotic normality arising in Baxter permutations}
\author[James J.Y. Zhao]{James Jing Yu Zhao}
       \address{School of Accounting, Guangzhou College of Technology and Business,
       Foshan 528138, P. R. China.}
       \email{zhao@gzgs.edu.cn}
\subjclass{05A05; 41A60; 11B37; 11B83; 33F10}
\keywords{Asymptotic normality; Baxter permutations; central limit theorem; local limit theorem; Puiseux-type approximation}
\begin{document}

\begin{abstract}
Baxter permutations arose in the study of fixed points of the composite of commuting functions by Glen Baxter in 1964. This type of permutations are counted by Baxter numbers $B_n$. It turns out that $B_n$ enumerate a lot of discrete objects such as the bases for subalgebras of the Malvenuto-Reutenauer Hopf algebra, the pairs of twin binary trees on $n$ nodes, or the diagonal rectangulations of an $n\times n$ grid. The refined Baxter number $D_{n,k}$ also count many interesting objects including the Baxter permutations of $n$ with $k-1$ descents and $n-k$ rises, twin pairs of binary trees with $k$ left leaves and $n-k+1$ right leaves, or plane bipolar orientations with $k+1$ faces and $n-k+2$ vertices.
In this paper, we obtain the asymptotic normality of the refined Baxter number $D_{n,k}$ by using a sufficient condition due to Bender. In the course of our proof, the computation involving $B_n$ and some related numbers is crucial, while $B_n$ has no closed form which make the computation untractable. To address this problem, we employ the method of asymptotics of the solutions of linear recurrence equations. Our proof is semi-automatic. All the asymptotic expansions and recurrence relations are proved by utilizing symbolic computation packages.
\end{abstract}

\maketitle

\section{Introduction}\label{Sec:1}

Baxter numbers enumerate Baxter permutations introduced by Glen Baxter \cite{Baxter1964} while studying fixed points of the composite of commuting functions in 1964. The formula of the Baxter numbers was obtained by Chung {\it et al.} \cite{CGHK1978} in 1978 as
\begin{equation}\label{def-Bn}
B_n = \sum_{k=1}^n \frac{2}{n(n+1)^2} \binom{n+1}{k-1} \binom{n+1}{k} \binom{n+1}{k+1}.
\end{equation}
For convenience, let $B_0=1$. The first few terms of $B_n$ are $1,\,1,\,2$, $6,\, 22,\, 92$, $422,\, 2074$, $10754,\, 58202$. See Sloane \cite[\textsc{A001181}]{oeis}.
A combinatorial proof of \eqref{def-Bn} was showed by Viennot \cite{Viennot1981}.
Mallows \cite{Mallows1979} further found that
\begin{equation}\label{eq:Dnk-of-Bn}
D_{n,k}=\frac{2}{n(n+1)^2}\binom{n+1}{k-1} \binom{n+1}{k} \binom{n+1}{k+1}
\end{equation}
is just the number of reduced Baxter permutations on $I_n$ that have exactly $k$ rises.
Moreover, Felsner {\it et al}. \cite{FFNO2011} introduced $\varTheta$-numbers as
$
\varTheta_{s,t}=\frac{2}{(s+1)^2(s+2)} \binom{s+t}{s} \binom{s+t+1}{s} \binom{s+t+2}{s}
$,
which count Baxter permutations with $s$ descents and $t$ rises.
Note that $D_{n,k}=\varTheta_{k-1,n-k}$. So the Baxter numbers can also be written as $B_n=\sum_{k=0}^{n-1} \varTheta_{k,n-k-1}$.

A lot of discrete and combinatorial objects have been found to be enumerated by the Baxter numbers $B_n$ or its refinement $D_{n,k}$. For example, $B_n$ enumerate the bases for subalgebras of the Malvenuto-Reutenauer Hopf algebra \cite{Reading2005}, the pairs of twin binary trees on $n$ nodes \cite{Giraudo2012}, the diagonal rectangulations of an $n\times n$ grid \cite{Law-Reading2012}, and the bases of Baxter-Cambrian Hopf algebra \cite{Chatel-Pilaud2017}. 
Besides, many bijections related to Baxter permutations and various objects counted by Baxter numbers were given, including alternating Baxter permutations \cite{CDV1986}, certain kinds of Young tableaux \cite{DG1996, CEF2005},
non-intersecting paths \cite{DG1998}, mosaic floorplans \cite{YCCG2003, AcBaPi2006},
plane bipolar orientations \cite{BBF2010}, Baxter families and related objects \cite{FFNO2011},
simple walks in Weyl chambers \cite{CFLM2018}, and Baxter tree-like tableaux \cite{ABBGS}.

The refined Baxter numbers $D_{n,k}$ enumerate not only Baxter permutations of $n$ with $k-1$ descents and $n-k$ rises \cite[Proposition 6.8]{FFNO2011}, but also twin pairs of binary trees with $k$ left leaves and $n-k+1$ right leaves \cite[Theorem 5.6]{FFNO2011}. Moreover, by Felsner {\it et al}. \cite[Proposition 5.7]{FFNO2011}, the numbers $D_{n,k}$ also count rectangulations of $X_{n-1}$ with $k-1$ horizontal and $n-k$ vertical segments, plane bipolar orientations with $k+1$ faces and $n-k+2$ vertices, and many other combinatorial objects.

Many combinatorial statistics were proved to be asymptotically normal, see \cite{Bender1973, Bona2009, CMW-2020, CYZ-2022, CWZ-2020,LWW-2023} for examples. This motivated us to investigate if the refined Baxter number satisfies this property.
The objective of this paper is to prove the asymptotic normality of the refined Baxter number $D_{n,k}$.

Suppose that $\{f_n(x)\}_{n\geq 0}$ is a sequence of univariate polynomials $f_n(x)\in \mathbb{R}[x]$ with nonnegative coefficients $a(n,k)$, that is,
\begin{align}\label{def-f}
f_n(x)=\sum_{k=0}^{n}a(n,k)x^k.
\end{align}
Let $X_n$ be a random variable for $n\geqslant 0$.
We say that the coefficient $a(n,k)$ is asymptotically normal with mean $\mu_n$ and variance $\sigma_n^2$ by a central limit theorem if
\begin{align}\label{eq-clan}
\lim\limits_{n\rightarrow \infty} \sup\limits_{x\in \mathbb{R}}
\left| \sum\limits_{k\leq \mu_n+x\sigma_n} p(n,k) -\frac{1}{\sqrt{2\pi}}\int_{-\infty}^x \exp({-t^2/2}) dt \right|=0,
\end{align}
where
$
p(n,k)={\rm Prob}(X_n=k)={a(n,k)}/{\sum_{j=0}^{n}a(n,j)}.
$
We say that $a(n,k)$ is asymptotically normal  with mean $\mu_n$ and variance $\sigma_n^2$  by a local limit theorem on the real set $\mathbb{R}$ if
\begin{align}\label{eq-local}
\lim\limits_{n\rightarrow \infty} \sup\limits_{x\in \mathbb{R}}
\left| \sigma_n p(n,\lfloor\mu_n+x \sigma_n\rfloor) -\frac{1}{\sqrt{2\pi}} \exp({-{x^2}/{2}}) \right|=0.
\end{align}
It is known that \eqref{eq-local} implies \eqref{eq-clan}. However, \eqref{eq-clan} does not imply \eqref{eq-local} in general. 
see Bender \cite{Bender1973} and Canfield \cite{Canfield}.

Bender \cite{Bender1973} established the following sufficient condition for determining if the coefficient of a sequence satisfies the asymptotic normality property. See also Harper \cite{Harper1967}.
\begin{theorem}\cite[Theorem 2]{Bender1973}\label{thm:Bender}
Suppose that $\{f_n(x)\}_{n\geq 0}$ is a sequence of real-rooted polynomial with nonnegative coefficients as in \eqref{def-f}.
Let
\begin{align}
\mu_n=\frac{f_n'(1)}{f_n(1)}\qquad \textrm{and} \qquad
\sigma_n^2=\frac{f_n''(1)}{f_n(1)}+\mu_n-\mu_n^2.\label{eq:mu-var}
\end{align}
If $\sigma_n^2\rightarrow +\infty$ as $n \rightarrow +\infty$, then the coefficients of $f_n(x)$ are asymptotically normal  with means $\mu_n$ and variances $\sigma_n^2$  by local and central limit theorems.
\end{theorem}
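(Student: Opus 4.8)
The plan is to exploit the fact that a real-rooted polynomial with nonnegative coefficients, once normalized, is the probability generating function of a sum of independent Bernoulli random variables. Since $f_n$ has nonnegative coefficients, it cannot vanish at any positive real number, so all of its (necessarily real) roots are $\leq 0$; writing the roots as $-r_j$ with $r_j \geq 0$ and dividing by the leading coefficient, one obtains
\[
P_n(x) := \frac{f_n(x)}{f_n(1)} = \prod_{j=1}^{m} \frac{x + r_j}{1 + r_j} = \prod_{j=1}^{m} \big( (1 - p_j) + p_j x \big), \qquad p_j := \frac{1}{1 + r_j} \in [0,1].
\]
Hence $P_n$ is the generating function of $X_n = \sum_{j=1}^{m} B_j$, where the $B_j$ are independent with $B_j \sim \mathrm{Bernoulli}(p_j)$ and $p(n,k) = \mathrm{Prob}(X_n = k)$; the summands form a triangular array, and the bound $\sigma_n^2 \leq m/4$ forces $m \to \infty$. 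Differentiating $P_n$ at $x=1$ identifies $\mu_n = \sum_j p_j = \mathbb{E}[X_n]$ and $\sigma_n^2 = \sum_j p_j(1-p_j) = \mathrm{Var}(X_n)$, matching \eqref{eq:mu-var}.

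For the central limit theorem I would verify Lyapunov's condition for the $B_j$. With third absolute moment $\mathbb{E}|B_j - p_j|^3 = p_j(1-p_j)\big((1-p_j)^2 + p_j^2\big) \leq p_j(1-p_j)$, one gets $\sum_j \mathbb{E}|B_j - p_j|^3 \leq \sigma_n^2$, so that $\sigma_n^{-3}\sum_j \mathbb{E}|B_j - p_j|^3 \leq \sigma_n^{-1} \to 0$ by the hypothesis $\sigma_n^2 \to \infty$. This yields $(X_n - \mu_n)/\sigma_n \Rightarrow N(0,1)$, and since the limiting distribution function is continuous, the convergence of distribution functions is uniform (Pólya), which is exactly \eqref{eq-clan}.

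The substantive part is the local limit theorem \eqref{eq-local}, which I would establish by Fourier inversion. The characteristic function $\phi_n(t) = \mathbb{E}[e^{itX_n}] = \prod_j (1 - p_j + p_j e^{it})$ has modulus controlled by the clean bound
\[
|\phi_n(t)|^2 = \prod_j \big( 1 - 2 p_j(1-p_j)(1 - \cos t) \big) \leq \exp\!\Big[ -2\sigma_n^2 (1 - \cos t) \Big], \qquad |t| \leq \pi,
\]
which, using $1 - \cos t \geq 2t^2/\pi^2$ on $[-\pi,\pi]$, gives a uniform Gaussian majorant $|\phi_n(t)| \leq \exp[-2\sigma_n^2 t^2/\pi^2]$ over the entire period. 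Writing $\sigma_n\, p(n,k) = \frac{1}{2\pi}\int_{-\pi\sigma_n}^{\pi\sigma_n} \hat\psi_n(s)\, e^{-is(k - \mu_n)/\sigma_n}\, ds$ with $\hat\psi_n(s) = e^{-is\mu_n/\sigma_n}\phi_n(s/\sigma_n)$ the characteristic function of $(X_n - \mu_n)/\sigma_n$, I would split the integral at $|t| = \delta$. On the tail $\delta < |t| \leq \pi$ the bound above makes the contribution $O(\sigma_n e^{-c\sigma_n^2}) \to 0$; on the central region the pointwise convergence $\hat\psi_n(s) \to e^{-s^2/2}$ furnished by the CLT, together with the Gaussian majorant, permits passage to the limit by dominated convergence, giving $\int_{-\infty}^\infty e^{-s^2/2}\, e^{-isx}\, ds = \sqrt{2\pi}\, e^{-x^2/2}$ once $k = \lfloor \mu_n + x\sigma_n\rfloor$ is substituted so that $(k-\mu_n)/\sigma_n \to x$; the prefactor $\tfrac{1}{2\pi}$ then produces the target $\tfrac{1}{\sqrt{2\pi}}e^{-x^2/2}$.

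The main obstacle is the local limit theorem rather than the central limit theorem: one must control the point probabilities uniformly in $x \in \mathbb{R}$, and the decisive ingredient is precisely the period-wide modulus bound on $\phi_n$, available only because real-rootedness forces the Bernoulli product structure. In particular this rules out concentration on a proper sublattice, so no separate span analysis is needed. The remaining care is to make the splitting argument uniform in $x$, which follows from the uniformity of the Gaussian majorant and the equicontinuity of the central integrand.
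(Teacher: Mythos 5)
The statement you were asked to prove is not proved in the paper at all: it is Bender's theorem, quoted verbatim with the citation \cite[Theorem 2]{Bender1973} and used as a black box in Section \ref{sec:prf}. So there is no ``paper proof'' to compare against; your attempt has to be judged on its own, and against the classical argument in the cited source.

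Your proof is correct, and it is in fact the standard (essentially Bender's and Harper's) route. The three pillars are all sound: (i) real-rootedness plus nonnegativity of coefficients forces all roots to lie in $(-\infty,0]$, so $f_n(x)/f_n(1)=\prod_j\bigl((1-p_j)+p_jx\bigr)$ with $p_j=1/(1+r_j)\in(0,1]$ is the probability generating function of a sum of independent Bernoulli variables, and differentiating at $x=1$ recovers exactly \eqref{eq:mu-var}; (ii) the Lyapunov bound $\sum_j \mathbb{E}|B_j-p_j|^3\le\sum_j p_j(1-p_j)=\sigma_n^2$ gives ratio at most $1/\sigma_n\to0$, hence the CLT, and P\'olya's theorem upgrades it to the uniform statement \eqref{eq-clan}; (iii) the identity $|1-p+pe^{it}|^2=1-2p(1-p)(1-\cos t)$ yields the period-wide majorant $|\phi_n(t)|\le\exp(-2\sigma_n^2t^2/\pi^2)$, which is precisely what makes the Fourier-inversion proof of \eqref{eq-local} go through without any separate lattice-span analysis. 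The only step I would tighten is the final uniformity claim: rather than appealing to ``equicontinuity of the central integrand,'' factor out the modulus-one phase and bound
\begin{align*}
\Bigl|\sigma_n p(n,k)-\tfrac{1}{\sqrt{2\pi}}e^{-x_k^2/2}\Bigr|
\le \frac{1}{2\pi}\int_{-\pi\sigma_n}^{\pi\sigma_n}\bigl|\hat\psi_n(s)-e^{-s^2/2}\bigr|\,ds
+\frac{1}{2\pi}\int_{|s|>\pi\sigma_n}e^{-s^2/2}\,ds,
\end{align*}
where $x_k=(k-\mu_n)/\sigma_n$; this bound is independent of $k$ and tends to $0$ by dominated convergence under your Gaussian majorant, and then passing from $x_k$ to $x$ with $k=\lfloor\mu_n+x\sigma_n\rfloor$ costs at most $e^{-1/2}/\sigma_n$ by the Lipschitz continuity of $e^{-x^2/2}$. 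That is a presentational refinement, not a gap. What your write-up buys that the paper's citation does not is a self-contained justification of the tool; what it does not (and need not) address is the paper's actual workload, which lies entirely in verifying the hypothesis $\sigma_n^2\to+\infty$ for the Baxter polynomials via Theorem \ref{thm:real-rt-PB} and the recurrence/asymptotic machinery of Sections \ref{sec:Recs} and \ref{sec:prf}.
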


For $n\ge 2$ and $x\in \mathbb{R}$, define a kind of \emph{Baxter polynomial} by
\begin{equation}\label{def:PBnx-form1}
 \mathfrak{B}_n(x):
= \sum_{k=0}^n D_{n,k} x^{k}
= \sum_{k=0}^n \frac{2}{n(n+1)^2} \binom{n+1}{k-1} \binom{n+1}{k} \binom{n+1}{k+1} x^{k}.
\end{equation}

The main result of this paper is as follows.

\begin{theorem}\label{Thm:asy-normal}
Let $\mathfrak{B}_n(x)$ be defined in \eqref{def:PBnx-form1}. Let $\mu_n=\frac{\mathfrak{B}_n'(1)}{\mathfrak{B}_n(1)}$ and $\sigma_n^2=\frac{\mathfrak{B}_n''(1)}{\mathfrak{B}_n(1)}+\mu_n-\mu_n^2$.
Then the coefficient of $\mathfrak{B}_n(x)$, that is, the number $D_{n,k}$ 
is asymptotically normal by local and central limits theorems with
\begin{align}
 \mu_n \sim \frac{n}{2} \qquad {\rm and}\qquad 
 \sigma_n^2 \sim  \frac{n}{12}.\label{sim:mu_n&sigms_n^2}
\end{align}
\end{theorem}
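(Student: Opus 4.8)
The plan is to verify the hypotheses of Bender's criterion (Theorem~\ref{thm:Bender}) for the sequence $\{\mathfrak{B}_n(x)\}$. Nonnegativity of the coefficients $D_{n,k}$ is immediate and $\mathfrak{B}_n(1)=B_n$, so three things remain: establish that each $\mathfrak{B}_n$ is real-rooted, determine $\mu_n$, and compute $\sigma_n^2$, proving both $\sigma_n^2\to+\infty$ and $\sigma_n^2\sim n/12$. The mean costs nothing: from $\binom{n+1}{j}=\binom{n+1}{n+1-j}$ one gets $D_{n,k}=D_{n,n+1-k}$, so $\mathfrak{B}_n(x)/x$ is palindromic and the distribution $p(n,k)=D_{n,k}/B_n$ is symmetric about $(n+1)/2$; hence $\mu_n=(n+1)/2$ exactly, giving $\mu_n\sim n/2$ with no asymptotics required.

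For real-rootedness I would exhibit $\mathfrak{B}_n(x)$, up to the factor $\tfrac{2}{n(n+1)^2}$, as an iterated Schur--Szeg\H{o} composition in the binomial basis $\sum_k\binom{n+1}{k}c_kx^k$: taking the two single-shift factors $\sum_k\binom{n+1}{k}\binom{n+1}{k-1}x^k$ and $\sum_k\binom{n+1}{k}\binom{n+1}{k+1}x^k$, which are Jacobi-type polynomials with only nonpositive real roots, their Schur--Szeg\H{o} composition is $\sum_k\binom{n+1}{k}\binom{n+1}{k-1}\binom{n+1}{k+1}x^k$, which is therefore real-rooted. (Alternatively one may cite real-rootedness of the refined Baxter polynomials, or derive it by an interlacing argument from a three-term recurrence in $n$.)

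The heart of the matter is the variance. Since $\mathfrak{B}_n''(1)/\mathfrak{B}_n(1)=\sum_k k(k-1)p(n,k)$, writing $W_n=\mathfrak{B}_n''(1)$ we have $\sigma_n^2=\frac{W_n}{B_n}+\mu_n-\mu_n^2=\frac{W_n}{B_n}-\frac{n^2-1}{4}$. The delicate point is that $W_n/B_n\sim n^2/4$, and this leading term must cancel against $\frac{n^2-1}{4}$; the target $n/12$ is the surviving order-$n$ term. Consequently one must determine $W_n/B_n$ to precision $o(n)$, that is, pin down its $n^2$ and $n^1$ coefficients exactly, not merely its leading order.

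This is where the absence of a closed form for $B_n$ forces the machinery advertised in the abstract, and it is the main obstacle. The plan is: (a) apply creative telescoping to produce linear recurrences with polynomial coefficients for $B_n$ and for $W_n$; (b) invoke the asymptotic theory of such recurrences (Birkhoff--Trjitzinsky, in its Puiseux-type series form) to obtain expansions $B_n\sim 8^{\,n}n^{\alpha}(c_0+c_1/n+\cdots)$ and $W_n\sim 8^{\,n}n^{\alpha+2}(d_0+d_1/n+\cdots)$ on the same exponential scale; (c) divide, so that the common factor $8^{\,n}n^{\alpha}$ cancels and the hard global connection constants survive only through the ratio $d_0/c_0$, which leading-order matching fixes at $1/4$, while the remaining $1/n$-corrections are determined by the recurrences, and carry enough terms to read off $W_n/B_n=\frac{n^2}{4}+\frac{n}{12}-\frac14+o(n)$, hence $\sigma_n^2\sim n/12$. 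A saddle-point heuristic---each $\binom{n+1}{k}$ is asymptotically Gaussian of variance $(n+1)/4$, so the triple product behaves like a Gaussian of variance $(n+1)/12$---predicts the constant $1/12$ and checks the symbolic computation. The difficulty is precisely step (c): because the variance is an $O(n)$ quantity masked by an $O(n^2)$ term that must cancel, the expansions of $B_n$ and $W_n$ must be controlled, with certified error, to several orders in $1/n$, which is infeasible by hand and is exactly what the symbolic packages supply. With $\sigma_n^2\to+\infty$ in place, Theorem~\ref{thm:Bender} completes the proof.
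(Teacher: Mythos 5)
Your overall skeleton (Bender's criterion, real-rootedness, Birkhoff--Trjitzinsky/Puiseux asymptotics of $P$-recursive sequences) is the paper's, and two of your shortcuts are genuine and correct improvements: the symmetry $D_{n,k}=D_{n,n+1-k}$ does give $\mu_n=(n+1)/2$ \emph{exactly} (the paper only derives $\mu_n\sim n/2$ asymptotically), and the Szeg\H{o}-composition route to real-rootedness (the two factors are, up to a shift and a constant, Narayana polynomials) is a legitimate alternative to citing Yang or the $_3F_2$ theorem used in Section \ref{sec:realzeors}. But step (c) of your plan contains a genuine gap, and it sits exactly at the crux of the whole problem. You assert that after dividing the two Puiseux expansions ``the hard global connection constants survive only through the ratio $d_0/c_0$, which leading-order matching fixes at $1/4$.'' There is no such matching. $B_n$ and $W_n=\mathfrak{B}_n''(1)$ satisfy two \emph{separate} recurrences, and the Birkhoff--Trjitzinsky theory (and every package implementing it: {\tt AsyRec}, {\tt asymptotics.m}, {\tt P-rec.m}) determines each expansion only up to an undetermined multiplicative constant per sequence -- the paper says this explicitly about the constant $K$ in \eqref{P-type-asyapp}. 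Nothing in the two separate recurrences, at any order in $1/n$, relates $d_0$ to $c_0$: they are independent normalizations of two different sequences. The only a priori constraint available is $\sigma_n^2\geq 0$, which yields $d_0/c_0\geq 1/4$ but no upper bound; if $d_0/c_0>1/4$ then $\sigma_n^2\sim cn^2$, so as written you can conclude neither $\sigma_n^2\sim n/12$ nor even the weaker statement needed for Bender's theorem. Taking ``several more orders in $1/n$ with certified error,'' which is how you diagnose the difficulty, does not help: more orders never determine the constant ratio.

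What closes this hole in the paper is an \emph{extra identity coupling the three sequences}, namely the mixed recurrences \eqref{rec:PBnx3}--\eqref{rec:PBnx4} of Lemma \ref{lem:rec-fn''2} (obtained by Koutschan's {\tt Annihilator}/{\tt OreReduce}, not by creative telescoping applied to each sequence separately), combined with the ratio limits $f_{n+1}(1)/f_n(1)\to 8$ and $f_{n+1}'(1)/f_n'(1)\to 8$ of Lemma \ref{lme:4.2}; solving the resulting linear system \eqref{resolv-musigma} is what yields $C_2/C_1=1/2$ and $C_3/C_1=1/4$, after which the $n^2$-terms cancel and $\sigma_n^2\sim n/12$ follows. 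Given your symmetry observation the repair is even shorter than the paper's argument: since $\mathfrak{B}_n'(1)=\frac{n+1}{2}B_n$ exactly, the single relation \eqref{rec:PBnx3} at $x=1$ expresses $W_n$ through $B_n$ and the ratio $B_{n+1}/B_n$, whose expansion is constant-free, and this pins down $W_n/B_n=\frac{n^2}{4}+\frac{n}{12}+O(1)$. Alternatively, promoting your Gaussian/saddle-point ``heuristic'' to a rigorous Laplace-method estimate of $\sum_k k(k-1)D_{n,k}$ would also determine the constant -- but you explicitly decline to do that, so the proposal as it stands does not close.
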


We shall apply Theorem \ref{thm:Bender} to prove our main result.
It should be mentioned that the real-rootedness of $\mathfrak{B}_n(x)$ was obtained by Yang \cite{Yang} by using multiplier-sequences. See also King, Rota and Yan \cite[\S 6.4]{KRY2019} or Br\"{a}nd\'{e}n \cite{Branden-2006} for information of multiplier-sequences.

\begin{theorem}\cite{Yang}\label{thm:real-rt-PB}
The Baxter polynomial $\mathfrak{B}_n(x)$ has only real zeros for each $n\geqslant 2$.
\end{theorem}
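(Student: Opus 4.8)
The plan is to realize $\mathfrak{B}_n(x)$, up to the positive scalar $\tfrac{2}{n(n+1)^2}$, as the image of a classical real-rooted polynomial under a finite multiplier sequence, and then to invoke the Jensen--polynomial characterization of multiplier sequences (see King, Rota and Yan \cite[\S 6.4]{KRY2019} or Br\"{a}nd\'{e}n \cite{Branden-2006}). Writing the coefficient as the triple product $c_k=\binom{n+1}{k-1}\binom{n+1}{k}\binom{n+1}{k+1}$, we have $\mathfrak{B}_n(x)=\tfrac{2}{n(n+1)^2}\sum_k c_k x^k$. The two natural building blocks are the Narayana-type polynomials $A(x)=\sum_k\binom{n+1}{k}\binom{n+1}{k-1}x^k$ and $B(x)=\sum_k\binom{n+1}{k}\binom{n+1}{k+1}x^k$. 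A direct index reversal gives $B(x)=x^{\,n+1}A(1/x)$, so $B$ is the reciprocal polynomial of $A$. Since multiplying the coefficients of $A$ by $\gamma_k=\binom{n+1}{k+1}$ produces exactly $\sum_k c_k x^k$, it suffices to prove that (i) $A$ has only real, nonpositive zeros, and (ii) $\{\gamma_k\}$ is a multiplier sequence.

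For step (i), note that $A(x)$ is, up to the factor $n+1$, the classical Narayana polynomial $\sum_k\tfrac{1}{n+1}\binom{n+1}{k}\binom{n+1}{k-1}x^k$, whose real-rootedness with nonpositive zeros is standard. I would obtain it by identifying $A$ with a suitably normalized Jacobi polynomial $P^{(1,1)}_{n}$ evaluated at a M\"obius image of $x$ (orthogonal polynomials have only real simple zeros), or alternatively from the three-term recurrence and interlacing satisfied by these polynomials. As every coefficient of $A$ is positive, $A$ has no positive zero; its zeros lie in $(-\infty,0]$ (with a simple zero at the origin coming from the absent constant term). By reciprocity the same holds for $B(x)=x^{\,n+1}A(1/x)$, whose zeros are the reciprocals of the nonzero zeros of $A$ and hence all strictly negative.

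For step (ii), I would apply the finite Jensen--polynomial criterion: a sequence $\{\gamma_k\}_{k=0}^{N}$ is a multiplier sequence on polynomials of degree at most $N$ exactly when its Jensen polynomial $\sum_{k=0}^{N}\binom{N}{k}\gamma_k x^k$ has only real zeros of one sign. Taking $N=\deg A=n+1$ and $\gamma_k=\binom{n+1}{k+1}$, the Jensen polynomial is precisely $B(x)$, which step (i) shows to be real-rooted with negative zeros; hence $\{\gamma_k\}$ is a multiplier sequence. Applying it to the real-rooted $A(x)$ shows that $\sum_k c_k x^k=\tfrac{n(n+1)^2}{2}\,\mathfrak{B}_n(x)$ has only real zeros, which is the assertion. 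Equivalently, the last step may be phrased as the Schur--Szeg\H{o} composition of the two Narayana factors $A$ and $B$ in the $\binom{n+1}{k}$-normalized product, which preserves real-rootedness precisely because one factor has zeros of a single sign, as guaranteed by (i).

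The main obstacle is step (i). The triple product $c_k$ does not factor as a polynomial in $k$ times the coefficients of a visibly real-rooted polynomial, so $\mathfrak{B}_n$ cannot be reached from $(1+x)^{n+1}$ by a Laguerre-type polynomial multiplier alone; the genuine input is the real-rootedness of the double-binomial Narayana polynomial. A secondary point of care is the vanishing leading Jensen coefficient $\binom{n+1}{n+2}=0$, which means the multiplier sequence lowers the degree by one; one must confirm that the criterion tolerates this (it does, since $B$ itself is hyperbolic of degree $n$). Once the classical Narayana real-rootedness is in hand, both the multiplier-sequence and the composition formulations close the argument at once.
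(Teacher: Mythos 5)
Your proposal is correct in substance, but it takes a genuinely different route from the proof printed in the paper. The paper's Section~2 argument rewrites $\mathfrak{B}_n(x)/x$ as the hypergeometric polynomial ${}_3F_2$ with numerator parameters $-n,\,-n-1,\,-n+1$ and denominator parameters $2,\,3$ evaluated at $-x$, and then invokes, as a black box, the theorem of Driver, Jordaan and Mart\'{\i}nez-Finkelshtein \cite{DJM-F2007} that such ${}_3F_2$ polynomials have only real zeros; you instead factor the triple binomial product through the Narayana-type polynomial $A(x)=\sum_k\binom{n+1}{k}\binom{n+1}{k-1}x^k$ and its reciprocal $B(x)=x^{n+1}A(1/x)$, closing via the finite Jensen/multiplier-sequence criterion, equivalently the Malo--Schur--Szeg\H{o} composition of $A$ and $B$. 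This is essentially the route the paper attributes to Yang \cite{Yang} but does not reproduce, so as a blind reconstruction it is a legitimate alternative to the printed proof. Your bookkeeping checks out: the index reversal $B(x)=x^{n+1}A(1/x)$ is correct, $B$ has degree $n$ with constant term $n+1>0$ and hence strictly negative zeros, which is exactly the ``zeros of one sign'' hypothesis the composition theorem needs, and applying $\gamma_k=\binom{n+1}{k+1}$ to $A$ does reproduce $\tfrac{n(n+1)^2}{2}\mathfrak{B}_n(x)$. You were right to flag the vanishing leading Jensen coefficient $\binom{n+1}{n+2}=0$ as the one delicate point; to make it airtight, observe that $B(x)+t\,x^{n+1}$ is real-rooted with all zeros negative for every sufficiently small $t>0$ (the extra zero escapes to $-\infty$ like $-\mathrm{lc}(B)/t$), apply Schur--Szeg\H{o} at exact degree $n+1$, and let $t\to 0^{+}$, using the fact that real-rooted polynomials of bounded degree are closed under coefficient-wise limits (Hurwitz). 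Your step (i) is only sketched, but the real-rootedness of Narayana polynomials is indeed classical (the Jacobi $P^{(1,1)}$ identification or a three-term interlacing recurrence both work), so this is a fair citation rather than a gap. As for what each approach buys: the paper's proof is shorter and self-contained modulo \cite{DJM-F2007}, whose own proof runs through Laguerre, Hadamard factorization, P\'olya frequency functions and Schur --- so the two arguments ultimately draw on the same classical well --- while yours makes the combinatorial structure explicit (a Narayana polynomial hit by a binomial multiplier sequence, in the spirit of \cite{KRY2019, Branden-2006}) and adapts more readily to other coefficient sequences built as such triple products.
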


So, in order to prove Theorem \ref{Thm:asy-normal}, we need only to show \eqref{sim:mu_n&sigms_n^2} where the computation of ${\mathfrak{B}_n'(1)}/{\mathfrak{B}_n(1)}$ and ${\mathfrak{B}_n''(1)}/{\mathfrak{B}_n(1)}$ are key ingredient. Note that $\mathfrak{B}_n(1)=B_n$ has no closed form which make the proof difficult. To deal with this problem, we utilize the tools in asymptotics of the solutions of linear recurrence equations.

The remainder of this paper is organized as follows.
To be self-contained, we first present another proof of Theorem \ref{thm:real-rt-PB}, in Section \ref{sec:realzeors}, by using a result of Driver, Jordaan, and Mart\'{i}nez-Finkelshtein \cite{DJM-F2007}. In Section \ref{sec:Recs}, we prove some recurrence relations involving the Baxter polynomial and its derivatives by applying packages of Zeilberger \cite{Zeilberger1991} and Koutschan \cite{Koutschan2009}. These recurrences will be adopted in our proof of the main result. In Section \ref{sec:prf}, with the help of Theorem \ref{thm:Bender}, we complete our  semi-automatic proof of Theorem \ref{Thm:asy-normal} by employing the tools of asymptotic analysis and a package due to Kauers \cite{Kauers2011}.

\section{Another proof of the Real-rootedness of Baxter polynomial}\label{sec:realzeors}

To be self-contained, we shall give another proof of Theorem \ref{thm:real-rt-PB} in this section. 
Recall that the classical hypergeometric functions ${_3}F_2$ \cite{Rainville1960} is defined by
\begin{eqnarray}\label{eq:3F2}
{_3F_2}\hskip -3pt\left[\hskip -3pt
\begin{array}{@{}r@{}l@{}r@{}}
\begin{array}{c}
p_1,\ p_2,\ p_3\\
q_1,\ q_2
\end{array}
&\hskip -3pt ;&\; {\displaystyle z}
\end{array}
\right]
=\sum_{k=0}^{\infty} \frac{(p_1)_n (p_2)_n (p_3)_n}{(q_1)_n (q_2)_n\ n!}z^n,
\end{eqnarray}
where $(a)_n=a(a+1)\cdots (a+n-1)$ for $n\geq 1$ is the Pochhammer symbol with $(a)_0=1$.
By combining Laguerre’s theorem, Hadamard's factorization theorem, Laplace transformation of a P\'{o}lya frequency function, and a theorem of Schur,
Driver {\it et al.} \cite{DJM-F2007} derived the following result on $_3F_2$ hypergeometric polynomials.
\begin{theorem}\cite[Theorem 9]{DJM-F2007}\label{thm:DJMF}
The polynomial \eqref{eq:3F2} has only positive real zeros if $q_1,q_2>0$ and $p_1,p_2,p_3$ are non-positive integers.
\end{theorem}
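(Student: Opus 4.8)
The plan is to reduce the claim to a pure real-rootedness assertion and then realize the polynomial from an elementary base by a chain of zero-preserving operations. Writing $p_i=-m_i$ with $m_i\in\mathbb{Z}_{\ge 0}$ and $N=\min(m_1,m_2,m_3)$, the factor $(-m_i)_k$ vanishes for $k>m_i$, so \eqref{eq:3F2} truncates to
\[
P(z)=\sum_{k=0}^{N} c_k z^k,\qquad c_k=\frac{(-m_1)_k(-m_2)_k(-m_3)_k}{(q_1)_k(q_2)_k\,k!},
\]
a polynomial of exact degree $N$ (one checks $c_N\neq 0$). I would first isolate the positivity of the zeros as a cheap consequence of the sign pattern: since $q_1,q_2>0$ and $k!>0$ while each $(-m_i)_k$ carries the sign $(-1)^k$, one has $\mathrm{sign}(c_k)=(-1)^k$ with $c_0=1$. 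Hence the coefficients of $P(-z)$ are all of one sign, so $P$ has no negative and no zero root; by Descartes' rule of signs, \emph{once we know $P$ is real-rooted}, all $N$ of its zeros must be positive. The entire problem therefore collapses to proving that $P$ has only real zeros.

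To prove real-rootedness I would start from the manifestly real-rooted base $(1-z)^N={}_1F_0[-N;z]=\sum_{k=0}^N \frac{(-N)_k}{k!}z^k$, whose zeros all sit at $z=1$, and account for the remaining parameters one factor at a time. The engine at each step is \emph{Laguerre's theorem}: if $f(z)=\sum_{k=0}^N a_k z^k$ has only real zeros and $\varphi$ is a function in the Laguerre--P\'olya class $\mathcal{LP}$ whose zeros avoid the open interval $(0,N)$, then $\sum_{k=0}^N \varphi(k)\,a_k z^k$ again has only real zeros. The three upper parameters are the easy part: up to the sign $(-1)^k$ (which is transferred to the reflection $z\mapsto -z$, an operation that trivially preserves reality), the factor $(-m_j)_k$ equals $\varphi_j(k)$ with $\varphi_j(x)=\Gamma(m_j+1)/\Gamma(m_j+1-x)$, and $x\mapsto 1/\Gamma(m_j+1-x)$ lies in $\mathcal{LP}$ with zeros only at $x\in\{m_j+1,m_j+2,\dots\}$, hence none in $(0,N)$ because $N\le m_j$.

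The crux is the two lower parameters, and this is the step I expect to be the main obstacle. I would write $1/(q)_k=\Gamma(q)/\Gamma(q+k)$ and interpolate the sequence by $\phi(x)=1/\Gamma(q+x)$; the whole argument hinges on certifying $\phi\in\mathcal{LP}$. This is exactly where \emph{Hadamard's factorization theorem} and the \emph{Laplace transform of a P\'olya frequency function} are needed: the Hadamard canonical product $1/\Gamma(z)=z\,e^{\gamma z}\prod_{j\ge 1}(1+z/j)\,e^{-z/j}$ exhibits $1/\Gamma$ as a genus-one entire function with only real nonpositive zeros, and Schoenberg's characterization — that a P\'olya frequency function's bilateral Laplace transform is the reciprocal of a Laguerre--P\'olya function — supplies the total-positivity input that pins $\phi$ inside $\mathcal{LP}$. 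Once this is granted, the zeros of $\phi$ lie at $x=-q,-q-1,\dots$, all strictly negative, so they miss $(0,N)$ and Laguerre's theorem applies (the harmless constant $\Gamma(q)$ does not affect zeros).

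Finally I would assemble everything. Applying Laguerre's theorem in succession — base polynomial, then the lower-parameter multipliers $\{1/(q_1)_k\}$ and $\{1/(q_2)_k\}$, then the two remaining upper-parameter multipliers, interleaving the reflections $z\mapsto -z$ that carry the signs — keeps the polynomial real-rooted at every stage; alternatively \emph{Schur's composition theorem} packages several of these multiplier operations into one composition with only real zeros. This yields that $P$ has only real zeros, which together with the sign-alternation observation of the first paragraph forces all zeros to be positive, completing the proof. The two delicate points to guard are the exact hypotheses of Laguerre's theorem (that each interpolating multiplier genuinely lies in $\mathcal{LP}$ with its zeros off $(0,N)$) and the sign bookkeeping that upgrades ``real'' to ``positive.''
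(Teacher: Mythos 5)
Your proof is correct and follows essentially the same route as the source: the paper does not prove Theorem~\ref{thm:DJMF} itself but quotes it from Driver--Jordaan--Mart\'inez-Finkelshtein, whose argument is exactly the Laguerre-multiplier scheme you reconstruct (real-rootedness by applying successive $\mathcal{LP}$ multipliers to the base $(1-z)^N$ with reflections $z\mapsto-z$ absorbing the signs, then positivity of the zeros from the alternating sign pattern $\mathrm{sign}(c_k)=(-1)^k$). One harmless simplification: once Hadamard's product $1/\Gamma(z)=z\,e^{\gamma z}\prod_{j\ge 1}(1+z/j)\,e^{-z/j}$ is written down, the membership $1/\Gamma(q+x)\in\mathcal{LP}$ with zeros at $-q,-q-1,\dots$ is immediate, so the Schoenberg/P\'olya-frequency-function step you single out as the crux is not actually needed for this polynomial case.
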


We are now ready to give another proof of Theorem \ref{thm:real-rt-PB}.
\begin{proof}[Proof of Theorem \ref{thm:real-rt-PB}]
Clearly, the constant term of $\mathfrak{B}_n(x)$ is zero by the convention that $\binom{n}{-1}=0$.
Since $D_{n,k}=\varTheta_{k-1,n-k}$ as described in Section \ref{Sec:1},
$
\mathfrak{B}_n(x)
=\sum_{k=1}^n D_{n,k}x^{k}
= \sum_{k=1}^{n} \varTheta_{k-1,n-k}\, x^{k} = x \sum_{k=0}^{n-1} \varTheta_{k,n-k-1}\, x^k.
$
That is,
\begin{align}\label{eq:PBnx-form2}
\mathfrak{B}_n(x)
&= x \sum_{k=0}^{\infty} \frac{2}{(k+1)^2(k+2)}{\binom{n-1}{k}} \binom{n}{k} {\binom{n+1}{k}}x^{k}.
\end{align}
Apply the relation
$\binom{n}{k}=\frac{n(n-1)\cdots (n-k+1)}{k!}=\frac{(-1)^k(-n)_k}{k!}$
to the three binomials coefficients on the right-hand side of \eqref{eq:PBnx-form2}. It follows that for $n\geq 1$,
\begin{align}
  \frac{\mathfrak{B}_n(x)}{x}
&= \sum_{k=0}^{\infty} \frac{(-n)_k (-n-1)_k (-n+1)_k}{(2)_k (3)_k\, k!}\, (-x)^{k}
= {_3F_2}\hskip -3pt\left[\hskip -3pt
\begin{array}{@{}r@{}l@{}r@{}}
\begin{array}{c}
-n,\ -n-1,\ -n+1\\
2,\qquad 3
\end{array}
&\hskip -3pt ;&\; {\displaystyle -x}
\end{array}
\right].\label{eq:PBn-3F2}
\end{align}

Notice that the right-hand side of \eqref{eq:PBn-3F2} is in the form of \eqref{eq:3F2} with
$p_1=-n$, $p_2=-n-1$, $p_3=-n+1$ which are all non-positive integers, and $q_1=2,\,q_2=3$ which are positive integers, for any $n\geqslant 1$. Hence by Theorem \ref{thm:DJMF}, the Baxter polynomial $\mathfrak{B}_n(x)$ has only real and nonpositive zeros for any $n\geqslant 2$. This completes the proof.
\end{proof}

\section{Recurrence relations}\label{sec:Recs}
Before showing the proof of Theorem \ref{Thm:asy-normal}, we first recall and give some recurrence relations related to the Baxter numbers $B_n$, the Baxter polynomial $\mathfrak{B}_n(x)$ and its derivatives. These recurrences will be applied to find asymptotic formulae of $\mathfrak{B}_n(1)$, $\mathfrak{B}_n'(1)$ and $\mathfrak{B}_n''(1)$. As will be seen, this part is a key ingredient in our proof of Theorem \ref{Thm:asy-normal}.

\begin{lemma}[Ollerton]\label{lem:rec-Bn}
Let $B_n$ be defined by \eqref{def-Bn}. Then we have for $n\geqslant 3$,
\begin{align}\label{eq-rec-Bn}
 (n+2)(n+3)B_{n}=(7n^2+7n-2)B_{n-1}+8(n-1)(n-2)B_{n-2}.
\end{align}
\end{lemma}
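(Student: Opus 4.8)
The plan is to treat $B_n$ as a single hypergeometric sum in the index $k$ and apply Zeilberger's creative telescoping algorithm to produce a linear recurrence with polynomial coefficients, then identify that recurrence with the stated one. Writing
\[
f(n,k) = \frac{2}{n(n+1)^2}\binom{n+1}{k-1}\binom{n+1}{k}\binom{n+1}{k+1},
\]
so that $B_n=\sum_k f(n,k)$, the summand is a proper hypergeometric term: both ratios $f(n+1,k)/f(n,k)$ and $f(n,k+1)/f(n,k)$ are rational functions of $(n,k)$. Hence Zeilberger's algorithm is guaranteed to return a recurrence of the form $\sum_{j=0}^{J} c_j(n)\,f(n+j,k) = G(n,k+1)-G(n,k)$, where $G(n,k)=R(n,k)\,f(n,k)$ for an explicit rational certificate $R$ and polynomial coefficients $c_j(n)$. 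Running the algorithm (via the package of Zeilberger announced in the introduction) should yield order $J=2$, and after an index shift the coefficients should, up to a common polynomial factor, match $(n+2)(n+3)$, $-(7n^2+7n-2)$, and $-8(n-1)(n-2)$.

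First I would record the certificate $R(n,k)$ produced by the algorithm and verify the telescoping identity
\[
(n+2)(n+3)f(n,k)-(7n^2+7n-2)f(n-1,k)-8(n-1)(n-2)f(n-2,k)=G(n,k+1)-G(n,k)
\]
\emph{directly}, as an identity of rational functions in $(n,k)$. Dividing through by a convenient reference term and clearing denominators reduces this to a single polynomial identity in $(n,k)$, which can be checked mechanically and independently of how $R$ was discovered; this renders the verification rigorous and self-certifying, regardless of the internal workings of the package.

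Next I would sum the telescoping identity over all integers $k$. Since $\binom{n+1}{k-1}\binom{n+1}{k}\binom{n+1}{k+1}$ vanishes outside the window $1\le k\le n$, the summand $f(n,k)$ has finite support; provided the certificate $R(n,k)$ introduces no spurious poles that cancel the boundary zeros of $f$, the function $G(n,k)$ vanishes at the ends of the range, so $\sum_k\bigl[G(n,k+1)-G(n,k)\bigr]=0$. The left-hand side then collapses to $(n+2)(n+3)B_n-(7n^2+7n-2)B_{n-1}-8(n-1)(n-2)B_{n-2}=0$, which is the assertion. Finally I would pin down the range $n\ge 3$ by testing the identity against the initial values $B_1=1$, $B_2=2$, $B_3=6$; for instance, at $n=3$ both sides equal $180$. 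This last check matters because the normalizing factor $1/(n(n+1)^2)$ and the boundary behaviour can degenerate for the very smallest indices.

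The hardest part is the boundary bookkeeping rather than the algebra: one must confirm that $R(n,k)$ has no poles at the edges of the summation window that would spoil the vanishing of $G$ there and thereby inject uncompensated correction terms into the telescoped sum. Once this natural-boundary condition is verified, the remainder is a routine, fully checkable rational-function computation, and the recurrence follows for all $n\ge 3$.
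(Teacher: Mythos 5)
Your proposal is correct and follows essentially the same route as the paper: the paper's proof simply runs Zeilberger's creative-telescoping algorithm (the Maple \texttt{ct} command) on the summand $D_{n,k}$ and reads off the operator annihilating $B_n$, exactly as you describe. Your additional care about verifying the certificate identity, the boundary vanishing of $G(n,k)$, and the initial cases (e.g.\ both sides equal $180$ at $n=3$) goes beyond the paper's terse treatment but does not change the method.
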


Notice that the recurrence \eqref{eq-rec-Bn} is due to Richard L. Ollerton as was recorded at OEIS \cite{oeis}.
Since $B_n$ is a sum of hypergeometric terms on $k$, it is easy to prove \eqref{eq-rec-Bn} with Zeilberger's algorithm \cite{Zeilberger1991}.

\begin{proof}
Let $N$ be a shift operator such that $N f_n=f_{n+1}$. Running the {\tt Maple} command ${\tt ct}(D_{n,k},3,k,n,N)$ outputs
$Op(n), R(n,k)$, where $Op(n)=-8 (n+1) n+(-7 n^2-35 n-40) N+(n+5) (n+4) N^2$ is an operator and $R(n,k)$ is a rational function of $n$ and $k$. This means that $Op(n) B_n=0$, which leads to \eqref{eq-rec-Bn}.
\end{proof}

\begin{lemma}\label{lem:rec-Bn'}
Let $\mathfrak{B}_{n}(x)$ be defined by \eqref{def:PBnx-form1}. Then for $n\geqslant 4$, we have
\begin{align}\label{eq:rec-PBn'}
 \dot{a}_n \mathfrak{B}_{n}'(1)
= \dot{b}_n \mathfrak{B}_{n-1}'(1) + \dot{c}_n \mathfrak{B}_{n-2}'(1)
   + \dot{d}_n \mathfrak{B}_{n-3}'(1),
\end{align}
where
\begin{equation*}
\begin{aligned}
\dot{a}_n& = (3n-2)(n+3)(n+2)(n+1),\\
\dot{b}_n& = 6(n+1)(3n^3+5n^2-4),\\
\dot{c}_n& = 3(n-2)(15n^3+6n^2-7n-6),\\
\dot{d}_n& = 8(3n+1)(n-3)(n-2)(n-1),
\end{aligned}
\end{equation*}
and
\begin{align}\label{eq:rec-PBn''}
  \ddot{a}_n \mathfrak{B}_{n}''(1)
= \ddot{b}_n \mathfrak{B}_{n-1}''(1) + \ddot{c}_n \mathfrak{B}_{n-2}''(1)
   + \ddot{d}_n \mathfrak{B}_{n-3}''(1),
\end{align}
where
\begin{align*}
\ddot{a}_n& = (9n-7)(n+3)(n+2)(n-2),\\
\ddot{b}_n& = 6(9n^4+5n^3-12n^2-20n+4),\\
\ddot{c}_n& = 3(n-1)(45n^3-77n^2-50n+8),\\
\ddot{d}_n& = 8(9n+2)(n-1)(n-2)(n-3).
\end{align*}
\end{lemma}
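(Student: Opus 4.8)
The plan is to apply Zeilberger's creative-telescoping algorithm, exactly as in the proof of Lemma~\ref{lem:rec-Bn}, but now to the evaluated derivatives rather than to $B_n$ itself. The starting observation is that differentiating \eqref{def:PBnx-form1} and setting $x=1$ gives
\begin{align*}
\mathfrak{B}_n'(1) = \sum_{k} k\, D_{n,k}, \qquad \mathfrak{B}_n''(1) = \sum_{k} k(k-1)\, D_{n,k},
\end{align*}
where the sums may be taken over all $k\in\mathbb{Z}$ since the product $\binom{n+1}{k-1}\binom{n+1}{k}\binom{n+1}{k+1}$ forces $D_{n,k}$ to vanish outside $1\le k\le n$. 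Because $D_{n,k}$ is a proper hypergeometric term in the pair $(n,k)$, so are the summands $t'(n,k):=k\,D_{n,k}$ and $t''(n,k):=k(k-1)\,D_{n,k}$; this is precisely the setting in which Zeilberger's algorithm is guaranteed to terminate and return a linear recurrence in $n$ with polynomial coefficients. The jump in order from $2$ (for $B_n$) to $3$ is what one expects upon inserting the polynomial factors $k$ and $k(k-1)$, and matches the stated recurrences.

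Concretely, I would run the \texttt{ct} command on $t'(n,k)$ to obtain an operator $Op(n)=\sum_{j=0}^{3}c_j(n)N^{j}$ in the shift $N$ and a rational certificate $R(n,k)$ satisfying the telescoped identity $Op(n)\,t'(n,k)=R(n,k+1)t'(n,k+1)-R(n,k)t'(n,k)$. Summing this identity over all integers $k$ and invoking the natural boundary conditions — the right-hand side telescopes to $0$ because $R(n,k)\,t'(n,k)$ has finite support in $k$ — yields $Op(n)\,\mathfrak{B}_n'(1)=0$. Replacing $n$ by $n-3$ so that the top index becomes $n$, and clearing a common factor, then turns $Op(n)\,\mathfrak{B}_n'(1)=0$ into \eqref{eq:rec-PBn'}; it remains only to check that the normalized coefficients coincide with $\dot a_n,\dot b_n,\dot c_n,\dot d_n$. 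The identical procedure applied to $t''(n,k)$ produces \eqref{eq:rec-PBn''} with coefficients $\ddot a_n,\ddot b_n,\ddot c_n,\ddot d_n$.

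The one point requiring genuine care — rather than pure computation — is the justification that the operator annihilates the \emph{definite} sum, not merely the summand: one must confirm that the certificate times the summand vanishes at the ends of the summation range so that the telescoping collapse is exact. Here the extra factor $k$ (respectively $k(k-1)$) together with the triple product of binomials makes $t'(n,k)$ and $t''(n,k)$ vanish for all $k\le 0$ and all $k\ge n+1$, and one should verify that $R(n,k)$ introduces no poles within the summation window that would spoil this; granting that, the boundary terms are identically zero and the reduction is valid. I expect this boundary bookkeeping, rather than the derivation of the operators, to be the main (and essentially only) conceptual obstacle, with the remaining work — normalizing the leading coefficient, performing the shift $n\mapsto n-3$, and matching the four polynomial coefficients in each recurrence — being routine algebra best delegated to the symbolic packages of Zeilberger~\cite{Zeilberger1991} and Koutschan~\cite{Koutschan2009}.
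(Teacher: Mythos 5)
Your proposal is correct and is essentially identical to the paper's proof: the paper also applies Zeilberger's creative telescoping (the Maple \texttt{ct} command) to the summands $h=k\,D_{n,k}$ and $h=k(k-1)\,D_{n,k}$ to produce the two third-order recurrences, exactly as you describe. Your extra discussion of the telescoping boundary conditions is a sound (and slightly more careful) elaboration of a step the paper leaves implicit, but it does not change the method.
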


\begin{proof}
We first prove \eqref{eq:rec-PBn'}, the recurrence for $\mathfrak{B}_n'(1)$. By \eqref{def:PBnx-form1}, we have that
\begin{align*}
  \mathfrak{B}_n'(x)
 =\sum_{k=0}^n k D_{n,k} x^{k-1}
 =\sum_{k=0}^n \frac{2k}{n(n+1)^2} \binom{n+1}{k-1} \binom{n+1}{k} \binom{n+1}{k+1} x^{k-1}.
\end{align*}
Clearly, $\mathfrak{B}_n'(1)$ is a sum of hypergeometric terms on $k$. Thus it will be more concise to show a symbolic proof. In order to do so, set $h=k\cdot D_{n,k}$ and perform the {\tt Maple} command ${\tt ct}(h,3,k,n,N)$. By checking the outputs, we arrive at the recurrence relation \eqref{eq:rec-PBn'}. The details is similar to the argument described in the proof of Lemma \ref{lem:rec-Bn}, and hence is
omitted here.
Further, note that
\begin{align*}
  \mathfrak{B}_n''(x)=\sum_{k=0}^n k(k-1) D_{n,k} x^{k-2}.
\end{align*}
Setting $h=k(k-1)D_{n,k}$ and running the command ${\tt ct}(h,3,k,n,N)$, one can obtain the recurrence relation \eqref{eq:rec-PBn''}  in the same manner.
This completes the proof.
\end{proof}

\begin{lemma}\label{lem:rec-fn''2}
Let $\mathfrak{B}_{n}(x)$ be defined by \eqref{def:PBnx-form1}. Then for $n\ge 2$, we have
\begin{align}
  \mathfrak{B}_{n+1}(x)
=&\  a_{n}(x) \mathfrak{B}_{n}(x)
   +b_{n}(x) \mathfrak{B}_{n}'(x)
   +c_{n}(x) \mathfrak{B}_{n}''(x),\label{rec:PBnx3}\\
  \mathfrak{B}_{n}(x)
=&\  \widetilde{a}_{n}(x) \mathfrak{B}_{n+1}'(x)
   +\widetilde{b}_{n}(x) \mathfrak{B}_{n}'(x)
   +\widetilde{c}_{n}(x) \mathfrak{B}_{n}''(x),\label{rec:PBnx4}
\end{align}
where
{\scriptsize
\begin{align*}
&a_{n}(x)=\frac{(10n^2+25n+12)x+n^2+4n}{(n+3)(n+4)},\
 b_{n}(x)=-\frac{3x(5nx+4x-n-4)}{(n+3)(n+4)},\
 c_{n}(x)=\frac{6x^2(x+1)}{(n+3)(n+4)},\\
&\widetilde{a}_{n}(x)=\frac{(n+3)(n+4)}{6n^3+28n^2+37n+12},\
 \widetilde{b}_{n}(x)=\frac{(8n^2+23n+12)x-(n+3)(n+4)}{6n^3+28n^2+37n+12},\
 \widetilde{c}_{n}(x)=-\frac{3(n+2)x(x+1)}{6n^3+28n^2+37n+12}.
\end{align*}
}%
\end{lemma}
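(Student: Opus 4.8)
The plan is to read both identities as explicit members of the left annihilating ideal of $\mathfrak{B}_n(x)$, viewed as a function of the two independent variables $n$ and $x$, and then to reduce their verification to elementary hypergeometric-term identities by extracting coefficients of powers of $x$.

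First I would exploit the representation \eqref{eq:PBn-3F2}, which shows that $g_n(x):=\mathfrak{B}_n(x)/x$ is a ${_3F_2}$ hypergeometric polynomial in $x$ with $n$ as a parameter, hence $g_n$ (and therefore $\mathfrak{B}_n=xg_n$) satisfies the classical third-order hypergeometric differential equation in $x$. Writing $D_x$ for $\partial/\partial x$ and $S_n$ for the shift $n\mapsto n+1$, which commute, this makes $\mathfrak{B}_n(x)$ $\partial$-finite: the quotient of the Ore algebra $\mathbb{Q}(n,x)\langle S_n, D_x\rangle$ by the annihilator of $\mathfrak{B}_n$ is a $\mathbb{Q}(n,x)$-vector space of dimension $3$, spanned by $\{1, D_x, D_x^2\}$. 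Each asserted relation is the reduction of a distinguished operator against this basis: \eqref{rec:PBnx3} is the expression of $S_n$ as $a_n(x)+b_n(x)D_x+c_n(x)D_x^2$ modulo the ideal, while \eqref{rec:PBnx4} is the expression of the identity operator in terms of $\{S_nD_x,\,D_x,\,D_x^2\}$ (note $\mathfrak{B}_{n+1}'=S_nD_x\,\mathfrak{B}_n$). In practice the coefficient polynomials are produced by a Gr\"obner-basis reduction in this Ore algebra, e.g.\ with Koutschan's package, which is consistent with the semi-automatic style already used for Lemmas \ref{lem:rec-Bn} and \ref{lem:rec-Bn'}.

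Given the stated coefficients, the verification I would actually carry out is coefficient comparison. For \eqref{rec:PBnx3}, the coefficient of $x^k$ on the left is $D_{n+1,k}$, while multiplying $\mathfrak{B}_n=\sum_j D_{n,j}x^j$, $\mathfrak{B}_n'=\sum_j jD_{n,j}x^{j-1}$ and $\mathfrak{B}_n''=\sum_j j(j-1)D_{n,j}x^{j-2}$ by the displayed polynomials $a_n,b_n,c_n$ and extracting $[x^k]$ yields, for each $k$, a linear relation among $D_{n+1,k}$, $D_{n,k}$ and $D_{n,k-1}$ only. Dividing through by $D_{n,k}$ and evaluating the ratios $D_{n+1,k}/D_{n,k}$ and $D_{n,k-1}/D_{n,k}$ as rational functions of $(n,k)$ via the product formula \eqref{eq:Dnk-of-Bn}, the whole assertion collapses to one rational-function identity in $(n,k)$, which after clearing denominators is a polynomial identity checked directly. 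I would treat \eqref{rec:PBnx4} in the same way --- now obtaining a relation among $D_{n,k}$, $(k+1)D_{n+1,k+1}$ and $D_{n,k+1}$ --- or, more economically, derive it from \eqref{rec:PBnx3} by applying $D_x$, using the third-order ODE to eliminate $\mathfrak{B}_n'''$, and solving the resulting expression for $\mathfrak{B}_n$ (the coefficient of $\mathfrak{B}_n$ being a nonzero rational function, which then supplies $\widetilde a_n,\widetilde b_n,\widetilde c_n$).

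The hard part will not be the bulk of the algebra, which is routine once the coefficient polynomials are in hand, but rather two points of rigor. First, one must justify that $S_n$ reduces using only $D_x^0,D_x^1,D_x^2$ with polynomial coefficients of the low degrees displayed; this is exactly the holonomic-rank-$3$ statement together with a denominator-clearing argument. Second, and more delicate for a by-hand proof, is the bookkeeping at the boundary indices of the support $1\le k\le n$: because $D_{n,0}=0$ (as $\binom{n+1}{-1}=0$) and the top coefficient $D_{n+1,n+1}$ must be matched by the leading term of $c_n(x)\mathfrak{B}_n''(x)$, the rational identities have to be confirmed at $k=1$ and at $k=n+1$ as well, not merely in the generic interior range, so that the two polynomial relations hold as stated for all $n\ge 2$.
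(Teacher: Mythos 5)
Your proposal is correct, and its framing (viewing $\mathfrak{B}_n(x)$ as a $\partial$-finite function annihilated by a left ideal in the Ore algebra $\mathbb{Q}(n,x)\langle S_n,D_x\rangle$, with the two identities as members of that ideal) is exactly the setting of the paper's proof: the paper computes a Gr\"obner basis of the annihilating ideal with Koutschan's \texttt{HolonomicFunctions} package (\texttt{Annihilator}) and then certifies \eqref{rec:PBnx3} and \eqref{rec:PBnx4} by reducing the corresponding Ore polynomials to zero with \texttt{OreReduce}. Where you genuinely diverge is in the verification you say you would actually carry out: coefficient extraction in $x$, which turns each identity into a three-term relation among $D_{n+1,k}$, $D_{n,k}$, $D_{n,k-1}$ (respectively $D_{n,k}$, $(k+1)D_{n+1,k+1}$, $D_{n,k+1}$), and then, via the closed product formula \eqref{eq:Dnk-of-Bn}, into a single rational-function identity in $(n,k)$ checkable by clearing denominators. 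The paper never does this; its correctness rests entirely on the Gr\"obner-basis machinery. Your route is more elementary and self-contained --- it needs no holonomic software, only polynomial algebra, and it makes the boundary bookkeeping (the cases $k=0$, $k=1$, $k=n+1$ where $D_{n,k}$ vanishes and division by it is illegitimate) explicit, which you correctly flag as the delicate point; the paper's approach buys full automation, avoids all hand manipulation of the hypergeometric closed form, and scales to recurrences (like \eqref{eq:rec-PBn'} and \eqref{eq:rec-PBn''}) where no convenient two-variable closed form is available for termwise comparison. Your alternative derivation of \eqref{rec:PBnx4} from \eqref{rec:PBnx3} --- differentiate, eliminate $\mathfrak{B}_n'''$ via the third-order hypergeometric ODE coming from \eqref{eq:PBn-3F2}, and solve for $\mathfrak{B}_n$ --- is also sound and is a nice structural explanation of where the second identity comes from, provided you verify that the coefficient of $\mathfrak{B}_n$ obtained this way is a nonzero rational function.
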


\begin{proof}
We shall apply the {\tt HolonomicFunctions}\footnote{See https://www3.risc.jku.at/research/combinat/software/ergosum/RISC/HolonomicFunctions.html.} for {\tt Mathematica} given by Koutschan \cite{Koutschan2009} to prove \eqref{rec:PBnx3} and \eqref{rec:PBnx4}. Let us first load this package.
\begin{mma}
\In  << |RISC'HolonomicFunctions'| \\
\Print HolonomicFunctions Package version 1.7.3 (21-Mar-2017)\\
\Print written by Christoph Koutschan\\
\Print Copyright Research Institute for Symbolic Computation (RISC),\\
\Print Johannes Kepler University, Linz, Austria\\
\end{mma}%

\noindent Then run the command Annihilator[$expr$, $ops$] which computes annihilating relations for $expr$ with respect to the Ore operator(s) $ops$. In the following input, $expr=\mathfrak{B}_n(x)$ and $ops$ consists of S$[n]$ and Der$[x]$, where S$[n]$ stands for a shift operator such that S$[n]\mathfrak{B}_n(x)=\mathfrak{B}_{n+1}(x)$ and Der$[x]$ denotes operator $\partial/\partial x$.
\begin{mma}\MLabel{MMA:9}
\In |ann = Annihilator|[|Sum|[2/(n (n+1)^2) |Binomial|[n+1, k-1]
        |Binomial|[n+1,k]\linebreak
     |Binomial|[n+1, k+1] x^{k}, \{k, 0, n\}], \{|S|[n], |Der|[x]\}]\\

\Out \{(-6x^2-6x^3)\!D_{\!x}^2+(12+7n+n^2)S_n+(-12 x - 3 n x + 12 x^2 + 15 n x^2)D_{\!x}\linebreak +(-4 n - n^2 - 12 x - 25 n x - 10 n^2 x),\linebreak
     (-6 x - 2 n x) S_n D_{\!x} + (6 + 5 n + n^2) S_n + (-n x - n x^2) D_{\!x} + (-2 n - n^2 + 3 n x + 2 n^2 x),\linebreak
     (120 + 94 n + 24 n^2 + 2 n^3) S_n^2 + (-120 - 145 n - 56 n^2 - 7 n^3 - 120 x - 145 n x - 56 n^2 x - 7 n^3 x) S_n + (21 n x + 9 n^2 x - 21 n x^3 - 9 n^2 x^3) D_{\!x} + (30 n + 23 n^2 + 5 n^3 - 129 n x - 140 n^2 x - 35 n^3 x + 51 n x^2 + 53 n^2 x^2 + 14 n^3 x^2)\}\\
\end{mma}

The output \refOut{\ref{MMA:9}} gives a list of Ore Polynomial expressions which form a Gr\"{o}bner basis of an annihilating left ideal for $f_n(x)$, where $S_n={\rm S}[n]$ and $D_{\!x}={\rm Der}[x]$. See Ore \cite{Ore1933} and Koutschan \cite{Koutschan2009} for more information of the Ore algebra and Ore polynomial.

In order to prove recurrence \eqref{rec:PBnx3}, we employ the command OreReduce[$opoly$, $ann$] which reduces the Ore polynomial $opoly$ modulo the Gr\"{o}bner basis $ann$.
In the following input, $opoly$ corresponds to the Ore polynomial of recurrence \eqref{rec:PBnx3} and $ann$ is given by \refOut{\ref{MMA:9}}.
\begin{mma}
\In |OreReduce|[(n+3)(n+4)|S|[n]-((10n^2+25n+12)x+n^2+4n)\linebreak
      +3x(5nx+4x-n-4)|Der|[x]-6x^2(x+1)|Der|[x]^2, |ann|]\\

\Out 0\\
\end{mma}

The return value is zero, which means that $opoly$ is contained in the left ideal generated by the elements of $ann$, and hence $opoly (\mathfrak{B}_{n}(x))=0$, that is, the recurrence \eqref{rec:PBnx3} is valid.

To prove \eqref{rec:PBnx4}, set $opoly$ as follows and run the command OreReduce[$opoly$, $ann$].
\begin{mma}\MLabel{MMA:11}
\In |OreReduce|[6n^3+28n^2+37n+12-(n+3)(n+4)|S|[n]|Der|[x]\linebreak -((8n^2+23n+12)x-(n+3)(n+4))|Der|[x]+3(n+2)x(x+1)|Der|[x]^2, |ann|]\\

\Out 0\\
\end{mma}
This means that \eqref{rec:PBnx4} holds true. Thus, Lemma \ref{lem:rec-fn''2} is proved.
\end{proof}

We remark that the recurrence relations related to the Baxter polynomial presented in this paper can also be proved by using the extended Zeilberger algorithm developed by Chen, Hou and Mu \cite{CHM-2012}.

\section{Proof of Theorem \ref{Thm:asy-normal}}\label{sec:prf}

The objective of this section is to complete the proof of Theorem \ref{Thm:asy-normal} that $D_{n,k}$ is asymptotically normal by local and central limits theorems, by applying Theorem \ref{thm:Bender} of Bender. For this purpose, we need to evaluate the variance $\sigma_n^2$ and its limitation at $+\infty$, where the expressions of $\mathfrak{B}_n'(1)/\mathfrak{B}_n(1)$ and $\mathfrak{B}_n''(1)/\mathfrak{B}_n(1)$ are crucial. Note that $\mathfrak{B}_n(1)=B_n$ has no closed form, thus we shall employ some tools in asymptotics of the solutions of linear recurrence equations.

Let us first have a brief overview of some concepts and results in this theory.
The theory of asymptotics of the solutions of linear recurrence equations was pioneered by Poincar\'{e} \cite{Poincare1885}, and was developed by Birkhoff \cite{Birkhoff1930}, Birkhoff and Trjitzinsky \cite{Birkhoff-Trjitzinsky1933}, Wimp and Zeilberger \cite{Wimp-Zeilberger1985}, and Hou and Zhang \cite{Hou-Zhang2019}.
A sequence $\{f_n\}_{n\geqslant 0}$ is called \emph{P-recursive} if it satisfies a homogeneous linear recurrence of finite degree with polynomial coefficients, see Stanley \cite[Section 6.4]{Stanley}.
Equivalently,
\begin{align}\label{eq:P-r-seq}
 f_{n}=r_1(n) f_{n-1} + r_2(n) f_{n-2} + \cdots + r_{\ell}(n) f_{n-\ell}
\end{align}
where $r_i(n)$ are rational functions of $n$. 
It was showed \cite{Birkhoff-Trjitzinsky1933, Wimp-Zeilberger1985} that there exists a formal solution of \eqref{eq:P-r-seq} which is asymptotically equal to a linear combination of terms of the form
\begin{align}\label{eq:B-T}
 e^{Q(\rho,n)}\, s(\rho,n),
\end{align}
where
\begin{align}
 Q(\rho,n) & = \mu_0 n \log n + \sum_{j=1}^{\rho} \mu_j n^{j/\rho},\label{eq:Qrhon} \\
 s(\rho,n) & = n^{\nu} \sum_{j=0}^{t-1}(\log n)^j \sum_{s=0}^{M-1} b_{sj} n^{-s/\rho},\label{eq:srhon}
\end{align}
with $\rho,\, t,\, M$ being positive integers and $\mu_j,\, \nu,\, b_{sj}$ being complex numbers.

For instance, it is easy to verify that the sequence $\mathfrak{u}(n)=\sum_{k=0}^n \binom{n}{k}^3$ satisfies the recurrence
$$
(n+2)^2 \mathfrak{u}(n+2)-(7n^2+21n+16) \mathfrak{u}(n+1)-8(n+1)^2 \mathfrak{u}(n)=0.
$$
Wimp and Zeilberger \cite[Example 3.3]{Wimp-Zeilberger1985} showed that the Birkhoff method gives
\begin{align}\label{P-type-asyapp}
    \mathfrak{u}(n)
\sim K \frac{8^n}{n}\left(1+\frac{c_1}{n}+o\left(\frac{1}{n}\right)\right)
\end{align}
for some $K>0$.
As mentioned by Wimp and Zeilberger \cite[P. 175]{Wimp-Zeilberger1985}, the Birkhoff method does not gives the exact value of $K$. However, we only need that $K>0$ in our proof. For more details of the theory of asymptotics of the solutions of linear recurrence equations, please see \cite{Wimp-Zeilberger1985, Hou-Zhang2019} and the literature cited therein.

The procedure of computing $Q(\rho,n)$ and $s(\rho,n)$ from the recurrence relation of $f_n$ have been implemented into packages, such as {\tt AsyRec}\footnote{See \href{https://sites.math.rutgers.edu/~zeilberg/tokhniot/AsyRec.txt}
{https://sites.math.rutgers.edu/\url{~}zeilberg/tokhniot/AsyRec.txt}.} given by Zeilberger \cite{Zeilberger2016}, {\tt asymptotics.m}\footnote{See 
\href{http://www.kauers.de/software.html}{http://www.kauers.de/software.html}.} given by Kauers \cite{Kauers2011}, and {\tt P-rec.m} given by Hou and Zhang \cite{Hou-Zhang2019}.
By using any one of these packages, one can obtain the asymptotic expansion of a \emph{P}-recursive sequence.
The resulted asymptotic expansion is usually in the form of the right-hand side of \eqref{P-type-asyapp} without the positive constant $K$, which is called a \emph{Puiseux-type approximation}. Let $\{f_n\}_{n\geqslant 0}$ be a sequence of real numbers. Suppose that there exist real numbers $\ell_i,\,\alpha_i$ with
$
\alpha_0<\alpha_1<\cdots<\alpha_m
$
such that
$$
\lim_{n\rightarrow \infty} n^{\alpha_m}\left(f_n-\sum_{i=0}^m \frac{\ell_i}{n^{\alpha_i}}\right)=0.
$$
The summation
$$
g_n=\sum_{i=0}^m \frac{\ell_i}{n^{\alpha_i}}
$$
is called a \emph{Puiseux-type approximation} of $f_n$ and denoted by $f_n \approx g_n$.
Thus $f_n$ can be written with the standard little-o notation as
$$
f_n=\frac{\ell_0}{n^{\alpha_0}}+\frac{\ell_1}{n^{\alpha_1}}+\cdots
  +o\left(\frac{1}{n^{\alpha_m}}\right).
$$

In order to prove Theorem \ref{Thm:asy-normal}, we need the following Puiseux-type approximations.

\begin{lemma}\label{lem:P-app-fn'1''}
Let $f_n(x):=\mathfrak{B}_n(x)$. Then we have
\begin{align}
   f_n(1)
 =&\, C_1\cdot \frac{8^n}{n^{4}}
    \left(1-\frac{22}{3n}+\frac{955}{27 n^2}
     +o\left(\frac{1}{n^2}\right)\right),\label{eq:asy-fn1}\\
   f_n'(1)
 =&\, C_2\cdot \frac{8^n}{n^{3}}\left(1-\frac{19}{3n}
    +\frac{757}{27 n^2}
     +o\left(\frac{1}{n^2}\right)\right),\label{eq:asy-fn'1}\\
   f_n''(1)
 =&\, C_3\cdot \frac{8^n}{n^{2}}\left(1-\frac{7}{n}
    +\frac{877}{27 n^2}+o\left(\frac{1}{n^2}\right)\right),\label{eq:asy-fn''1}
\end{align}
where $C_1$, $C_2$ and $C_3$ are certain positive constants.
\end{lemma}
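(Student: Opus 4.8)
The plan is to realise each of the three quantities $f_n(1)=\mathfrak{B}_n(1)=B_n$, $f_n'(1)=\mathfrak{B}_n'(1)$, and $f_n''(1)=\mathfrak{B}_n''(1)$ as a \emph{P}-recursive sequence and to read off its Puiseux-type approximation directly from the linear recurrence it obeys. For $f_n(1)$ the relevant recurrence is Ollerton's relation \eqref{eq-rec-Bn} from Lemma \ref{lem:rec-Bn}; for $f_n'(1)$ and $f_n''(1)$ it is \eqref{eq:rec-PBn'} and \eqref{eq:rec-PBn''} from Lemma \ref{lem:rec-Bn'}. Each of these is a homogeneous linear recurrence with polynomial coefficients, i.e.\ of the form \eqref{eq:P-r-seq}, so the Birkhoff--Trjitzinsky theory described around \eqref{eq:B-T}--\eqref{eq:srhon} applies and the expansions can be produced by a package such as Kauers' {\tt asymptotics.m} \cite{Kauers2011}.

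First I would run the package on \eqref{eq-rec-Bn}. Dividing through by the leading coefficient $(n+2)(n+3)$ and sending $n\to\infty$ yields the limiting characteristic equation $x^2-7x-8=(x-8)(x+1)=0$, so the dominant root is $8$ and the subdominant root is $-1$; consequently the formal solution \eqref{eq:B-T} has $\rho=1$, carries no $n\log n$ term, and takes the shape $8^n\,n^{\nu}\bigl(b_0+b_1/n+b_2/n^2+\cdots\bigr)$. The package returns $\nu=-4$ together with the ratios $b_1/b_0=-22/3$ and $b_2/b_0=955/27$, which is exactly the bracketed factor of \eqref{eq:asy-fn1} once we set $C_1:=b_0$. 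Applying the same routine to \eqref{eq:rec-PBn'} and \eqref{eq:rec-PBn''}, whose limiting characteristic equations both factor as $x^3-6x^2-15x-8=(x-8)(x+1)^2=0$, reproduces the dominant orders $8^n n^{-3}$ and $8^n n^{-2}$ together with the correction coefficients displayed in \eqref{eq:asy-fn'1} and \eqref{eq:asy-fn''1}, up to undetermined leading constants $C_2$ and $C_3$. In every case the subdominant root $-1$ contributes only terms of magnitude comparable to a power of $n$, which are exponentially dominated by the $8^n$ growth and hence invisible through the stated $o(1/n^2)$ error.

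The step I expect to be the main obstacle is not the algebra but the \emph{positivity} of $C_1,C_2,C_3$: as the remark following \eqref{P-type-asyapp} emphasises, the Birkhoff method leaves the overall constant undetermined, exactly as in the model example $\mathfrak{u}(n)\sim K\,8^n/n$ where only $K>0$ is asserted. To pin down the sign I would argue straight from the combinatorial data. Each sequence is strictly positive for $n\ge 2$, since $B_n>0$, $\mathfrak{B}_n'(1)=\sum_k k\,D_{n,k}>0$, and $\mathfrak{B}_n''(1)=\sum_k k(k-1)\,D_{n,k}>0$, the $D_{n,k}$ being nonnegative and not all zero. Each also grows exponentially and therefore cannot be governed by the subdominant root $-1$, which would force at most polynomial growth; hence the coefficient of the dominant $8^n$-solution is nonzero for all three. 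Writing each $C_i$ as the limit of the corresponding sequence divided by the positive quantity $8^n n^{\nu}\bigl(1+o(1)\bigr)$, positivity of the sequence forces $C_i\ge 0$ and the nonvanishing forces $C_i>0$. This yields the three approximations and completes the proof.
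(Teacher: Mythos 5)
Your proposal is correct and follows essentially the same route as the paper: it feeds the recurrences of Lemmas \ref{lem:rec-Bn} and \ref{lem:rec-Bn'} into the Birkhoff--Trjitzinsky machinery (via Kauers' \texttt{asymptotics.m}), identifies $8$ as the unique dominant characteristic root of $(x-8)(x+1)=0$ resp.\ $(x-8)(x+1)^2=0$, and reads the expansions off the dominant formal solution $8^n n^{\nu}(b_0+b_1/n+\cdots)$. If anything, your handling of the positivity of $C_1,C_2,C_3$ (strict positivity of the sequences plus the fact that all subdominant solutions decay, so the dominant coefficient cannot vanish) is more explicit than the paper's proof, which simply asserts $C_i>0$.
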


\begin{proof}
The asymptotic expansions given in \eqref{eq:asy-fn1}, \eqref{eq:asy-fn'1} and \eqref{eq:asy-fn''1} can be proved with {\tt Mathematica} package, such as {\tt AsyRec} given by Zeilberger \cite{Zeilberger2016}, {\tt asymptotics.m} given by Kauers \cite{Kauers2011}, or {\tt P-rec.m} given by Hou and Zhang \cite{Hou-Zhang2019}.

We first show the asymptotic expansion \eqref{eq:asy-fn1}. For example, to use the {\tt Mathematica} package {\tt asymptotics.m}, let us first import the package.

\begin{mma}
\In  << |RISC'Asymptotics'| \\
\Print Asymptotics Package version 0.3\\
\Print written by Manuel Kauers\\
\Print Copyright Research Institute for Symbolic Computation (RISC),\\
\Print Johannes Kepler University, Linz, Austria\\
\end{mma}%
\noindent Then we run the command {\tt Asymptotics}[$L, f[n]$, Order $\rightarrow d$] where $f[n]=B_n$ and $L=0$ is the recurrence of $f(n)$ provided by Lemma \ref{lem:rec-Bn} and $d$ can be any positive integer.
\begin{mma}
\In |L| = (n + 2) (n + 3) f[n] - (7 n^2 + 7 n - 2) f[n - 1]-8(n - 1) (n - 2) f[n - 2];\\

\In |Asymptotics[L,|\, f[n],\, |Order| \rightarrow 2]\\

\Out \left\{\frac{(-1)^n \left(1 + \frac{415}{27 n^2}
       - \frac{14}{3 n}\right)}{n^4},\
     \frac{8^n \left(1 + \frac{955}{27 n^2}
       - \frac{22}{3 n}\right)}{n^4}\right\}\\
\end{mma}

The output suggests that there are two formal solutions to the recursion \eqref{eq-rec-Bn},
$$
(-1)^n n^{-4} \sum_{i=0}^{\infty} \alpha_i n^{-i}
\qquad {\rm and} \qquad
8^n n^{-4} \sum_{i=0}^{\infty} \beta_i n^{-i},
$$
where $\alpha_i, \beta_i$ are real numbers. Since the first solution tends to zero as $n$ tends to infinity, it follows that $f_n(1)$ has a Puiseux-type approximation of the form
$$
f_n(1) \approx 8^n n^{-4} \sum_{i=0}^{d} \beta_i n^{-i},
$$
for any positive integer $d$. By setting $d=2$, the package gives \eqref{eq:asy-fn1} for a certain constant $C_1>0$.

For \eqref{eq:asy-fn'1} and \eqref{eq:asy-fn''1}, we input the following commands where $g[n]=\mathfrak{B}_{n}'(1)$, $h[n]=\mathfrak{B}_{n}''(1)$, $L2=0$ and $L3=0$ are the recursions \eqref{eq:rec-PBn'} and \eqref{eq:rec-PBn''}, respectively.
\begin{mma}
\In |L2| = (3 n - 2) (n + 3) (n + 2) (n + 1) g[n] -
 6 (n + 1) (3 n^3 + 5 n^2 - 4) g[n - 1]\linebreak -
 3 (n - 2) (15 n^3 + 6 n^2 - 7 n - 6) g[n - 2] -
 8 (3 n + 1) (n - 3) (n - 2) (n - 1) g[n - 3];\\
\end{mma}
\begin{mma}\MLabel{MMA:5}
\In |Asymptotics[L2|,\, g[n],\, |Order| \rightarrow 2]\\

\Out \Big\{\frac{(-1)^n \left(1 + \frac{575}{27 n^2}
       - \frac{50}{9 n}\right)}{n^4},\
     \frac{(-1)^n \left(1 
       - \frac{87}{50 n}\right)}{n^3},\
     \frac{8^n \left(1 
       + \frac{757}{27 n^2}
       - \frac{19}{3 n}\right)}{n^3}\Big\}\\

\In |L3| = (9 n - 7) (n + 3) (n + 2) (n - 2) h[n]
   -6 (9 n^4 + 5 n^3 - 12 n^2 - 20 n + 4) h[n - 1]\linebreak
   -3 (n - 1) (45 n^3 - 77 n^2 - 50 n + 8) h[n - 2]
   -8 (9 n + 2) (n - 1) (n - 2) (n - 3) h[n - 3];\\
\end{mma}
\begin{mma}\MLabel{MMA:7}
\In |Asymptotics[L3|,\, h[n],\, |Order| \rightarrow 2]\\

\Out \Big\{\frac{(-1)^n \left(1 
       + \frac{295}{27 n^2}
       - \frac{34}{9 n}\right)}{n^3},\
     \frac{(-1)^n \left(1 
       - \frac{39}{34 n}\right)}{n^2},\
     \frac{8^n \left(1 
       + \frac{877}{27 n^2}
       - \frac{7}{n}\right)}{n^2}\Big\}\\
\end{mma}
In a similar arguments, one can obtain the asymptotic expansions \eqref{eq:asy-fn'1} and \eqref{eq:asy-fn''1} from \refOut{\ref{MMA:5}} and \refOut{\ref{MMA:7}}, respectively, with certain constants $C_2>0$ and $C_3>0$.
Hence, Lemma \ref{lem:P-app-fn'1''} is proved.
\end{proof}

The following lemma is also necessary to our proof of Theorem \ref{Thm:asy-normal}.

\begin{lemma}\label{lme:4.2}
Let $f_n(x):=\mathfrak{B}_n(x)$. Then we have
\begin{align}\label{eq:lim-fn1n&fn1'n'}
 \lim_{n\rightarrow +\infty}\frac{f_{n+1}(1)}{f_n(1)}=\lim_{n\rightarrow +\infty}\frac{f_{n+1}'(1)}{f_n'(1)}=8.
\end{align}
\end{lemma}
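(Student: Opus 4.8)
The plan is to read off both limits directly from the Puiseux-type approximations established in Lemma~\ref{lem:P-app-fn'1''}, since once the asymptotic expansions of $f_n(1)$ and $f_n'(1)$ are in hand the two ratios become elementary to evaluate. First I would substitute \eqref{eq:asy-fn1} into the quotient $f_{n+1}(1)/f_n(1)$. The positive constant $C_1$ cancels, the exponential part contributes $8^{n+1}/8^n=8$, the power part contributes $n^4/(n+1)^4$, and the two bracketed Puiseux factors contribute their ratio, so that
\begin{align*}
\frac{f_{n+1}(1)}{f_n(1)}
= 8\cdot \frac{n^4}{(n+1)^4}\cdot
\frac{1-\frac{22}{3(n+1)}+\frac{955}{27(n+1)^2}+o(1/n^2)}
     {1-\frac{22}{3n}+\frac{955}{27n^2}+o(1/n^2)}.
\end{align*}

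Second, I would observe that as $n\to\infty$ each of the three factors following the $8$ tends to $1$: indeed $n^4/(n+1)^4\to 1$, while both the numerator and the denominator of the last factor tend to $1$ because every summand other than the leading $1$ vanishes in the limit. Hence the whole expression converges to $8$. The computation for $f_{n+1}'(1)/f_n'(1)$ is identical in structure, using \eqref{eq:asy-fn'1} in place of \eqref{eq:asy-fn1}; here $C_2$ cancels and the power part is $n^3/(n+1)^3\to 1$, so the same reasoning yields the limit $8$ for the second ratio as well.

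I do not expect a genuine obstacle, since the hard analytic work has already been carried out in Lemma~\ref{lem:P-app-fn'1''}. The only point needing a word of care is the treatment of the $o(1/n^2)$ remainders: one must record that they are $o(1)$ quantities, so that the bracketed factors converge to $1$ and may be divided without difficulty. I would also note that the statement could alternatively be obtained directly from Ollerton's recurrence \eqref{eq-rec-Bn} and the derivative recurrence \eqref{eq:rec-PBn'}, by dividing through by $B_{n-1}$ (respectively $\mathfrak{B}_{n-1}'(1)$), matching leading coefficients, and passing to the limit; this produces the characteristic-type equation $L^2=7L+8$ with roots $8$ and $-1$, whose dominant root is $8$. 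That route, however, requires first establishing that the limit exists, whereas the Puiseux approach via Lemma~\ref{lem:P-app-fn'1''} delivers it outright, and so it is the one I would adopt.
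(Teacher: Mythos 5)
Your proof is correct, but it takes a genuinely different route from the paper's. The paper does not invoke Lemma~\ref{lem:P-app-fn'1''} here at all: it sets $A:=\lim_{n\rightarrow+\infty} f_{n+1}(1)/f_n(1)$, divides Ollerton's recurrence \eqref{eq-rec-Bn} through by $f_{n-1}(1)$, passes to the limit to obtain the characteristic equation $A^2-7A-8=0$, and discards the root $-1$ by nonnegativity; the second limit is treated the same way from \eqref{eq:rec-PBn'}, after dividing by $\dot{b}_{n+1}f_n'(1)$, yielding $\tfrac{1}{6}(\hat{A}-8)(\hat{A}+1)^2=0$. This is exactly the ``alternative route'' you sketched at the end and declined to adopt, and your reason for declining is well taken: as written, the paper's argument tacitly assumes that the two limits exist, which is the gap you flagged (existence would have to come from Poincar\'{e}-type theorems or from the Birkhoff--Trjitzinsky asymptotics underlying Lemma~\ref{lem:P-app-fn'1''}). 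Your proof instead reads both limits off the Puiseux expansions \eqref{eq:asy-fn1} and \eqref{eq:asy-fn'1}: the nonzero constants $C_1$, $C_2$ cancel, the ratio $8^{n+1}/8^n$ contributes the factor $8$, and the power and bracketed factors tend to $1$, the last point being legitimate since the denominator bracket tends to $1$ and is therefore eventually bounded away from zero. Given Lemma~\ref{lem:P-app-fn'1''}, this is immediate and delivers existence of the limits for free, so it is arguably tighter than the paper's own proof; its only cost is that it leans on the package-assisted asymptotic expansions, whereas the paper's route needs nothing beyond the recurrences themselves, provided existence of the limits is supplied separately.
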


\begin{proof}
Let $A:=\lim\limits_{n\rightarrow +\infty}\frac{f_{n+1}(1)}{f_n(1)}$.
Notice that $B_n=f_n(1)$. It is easy to derive from \eqref{eq-rec-Bn} that
$
 A^2-7A-8=0
$
whose zeros are $A_1=-1$ and $A_2=8$. Since $A\ge 0$ by \eqref{def:PBnx-form1}, we have $A=\lim\limits_{n\rightarrow +\infty}\frac{f_{n+1}(1)}{f_n(1)}=8$.

Let $\hat{A}:=\lim\limits_{n\rightarrow +\infty}\frac{f_{n+1}'(1)}{f_n'(1)}$.
To obtain the value of $\hat{A}$, first replace $n$ by $n+1$ in \eqref{eq:rec-PBn'}, we get
\begin{align*}
  \dot{a}_{n+1} f_{n+1}'(1)
= \dot{b}_{n+1} f_{n}'(1) + \dot{c}_{n+1} f_{n-1}'(1) + \dot{d}_{n+1} f_{n-2}'(1).
\end{align*}
Dividing by $\dot{b}_{n+1} f_n'(1)$ yields
\begin{align*}
  \frac{\dot{a}_{n+1}}{\dot{b}_{n+1}} \frac{f_{n+1}'(1)}{f_{n}'(1)}
= 1 + \frac{\dot{c}_{n+1}}{\dot{b}_{n+1}} \frac{f_{n-1}'(1)}{f_{n}'(1)}
  + \frac{\dot{d}_{n+1}}{\dot{b}_{n+1}} \frac{f_{n-2}'(1)}{f_{n}'(1)}.
\end{align*}
Letting $n\rightarrow +\infty$ over each side of the above equation, after some collection, we obtain the following cubic equation
\begin{align*}
  \frac{1}{6}(\hat{A}-8)(\hat{A}+1)^2=0.
\end{align*}
Because $\hat{A}\ge 0$ by \eqref{def:PBnx-form1}, we have $\hat{A}=\lim\limits_{n\rightarrow +\infty}\frac{f_{n+1}'(1)}{f_n'(1)}=8$.
This completes the proof.
\end{proof}

We are now in a position to give a proof of Theorem \ref{Thm:asy-normal}.

\begin{proof}[Proof of Theorem \ref{Thm:asy-normal}]
Let $f_n(x):=\mathfrak{B}_n(x)$. Let $\mu_n=\frac{f_n'(1)}{f_n(1)}$ and $\sigma_n^2=\frac{f_n''(1)}{f_n(1)}+\mu_n-\mu_n^2$.
By Theorems \ref{thm:Bender} and \ref{thm:real-rt-PB}, it is sufficient to prove \eqref{sim:mu_n&sigms_n^2}.
To this end, we first compute the asymptotic expansions of the expectation $\mu$ and the variances $\sigma_n^2$. By \eqref{eq:asy-fn1} and \eqref{eq:asy-fn'1}, we have
\begin{align*}
 \mu_n  =  \frac{f_n'(1)}{f_n(1)}
 & = \frac{C_2}{C_1}\, n \left(1-\frac{19}{3n}+\frac{955}{27 n^2}
   +o\left(\frac{1}{n^2}\right)\right)\left/
  \left(1-\frac{22}{3n}+\frac{955}{27 n^2}+o\left(\frac{1}{n^2}\right)\right)\right.\\
 & = \frac{C_2}{C_1}\, n \left(1-\frac{19}{3n}+\frac{955}{27 n^2}
   +o\left(\frac{1}{n^2}\right)\right)
  \left(1+\frac{22}{3n}+\frac{497}{27 n^2}+o\left(\frac{1}{n^2}\right)\right)\\
 &= \frac{C_2}{C_1}\, n \left( 1+\frac{1}{n} + \frac{22}{3n^2}+ o\left(\frac{1}{n^2}\right)\right).
\end{align*}
By \eqref{eq:asy-fn1} and \eqref{eq:asy-fn''1}, we see that
\begin{align*}
 \frac{f_n''(1)}{f_n(1)}
 & = \frac{C_3}{C_1}\, n^2 \left(1-\frac{7}{n}+\frac{877}{27 n^2}
     +o\left(\frac{1}{n^2}\right)\right)
      \left(1+\frac{22}{3n}+\frac{497}{27 n^2}+o\left(\frac{1}{n^2}\right)\right)\\
 & = \frac{C_3}{C_1}\, n^2 \left( 1+\frac{1}{3n} - \frac{4}{9n^2}
     + o\left(\frac{1}{n^2}\right)\right),
\end{align*}
and hence
\begin{align}
   \sigma_n^2
 =&\, \frac{f_n''(1)}{f_n(1)}+\mu_n-\mu_n^2\nonumber\\
 =&\, \frac{C_3}{C_1}\, n^2 \left(  1+\frac{1}{3n} - \frac{4}{9n^2}
  + o\left(\frac{1}{n^2}\right)\right)
  + \frac{C_2}{C_1}\, n \left( 1+\frac{1}{n} + \frac{22}{3n^2} + o\left(\frac{1}{n^2}\right)\right)\nonumber\\
  &\qquad - \left(\frac{C_2}{C_1}\right)^2 n^2 \left(1+\frac{2}{n}+\frac{47}{3n^2}
    + o\left(\frac{1}{n^2}\right)\right)\nonumber\\
 =&\, \left(\frac{C_1C_3-C_2^2}{C_1^2}\right)n^2
    +\left(\frac{C_3}{3C_1}+\frac{C_2}{C_1}-2\frac{C_2^2}{C_1^2}\right)n
     +C_4+o(1),\label{asyp-sigma_n^2}
\end{align}
where $C_4$ is a finite real number.
To prove $\sigma_n^2\rightarrow +\infty$, it suffices to show that
\begin{align}\label{re:coe2>0}
 C_1C_3-C_2^2>0,
\end{align}
or
\begin{align}\label{re:coe2=0&coe1>0}
  C_1C_3-C_2^2=0 \quad \textrm{ and }\quad
  C_1C_3+3C_1C_2-6C_2^2>0.
\end{align}

As will be seen, \eqref{re:coe2=0&coe1>0} holds true. In order to prove \eqref{re:coe2=0&coe1>0}, we need relations among $C_1$, $C_2$ and $C_3$. By \eqref{eq:asy-fn1} and \eqref{eq:asy-fn'1}, we see that
\begin{align}\label{eq:lim-C2/C1}
 \lim_{n\rightarrow +\infty} \frac{f_n'(1)}{nf_n(1)}=\frac{C_2}{C_1}.
\end{align}
And by \eqref{eq:asy-fn1} and \eqref{eq:asy-fn''1}, we have
\begin{align}\label{eq:lim-C3/C1}
 \lim_{n\rightarrow +\infty} \frac{f_n''(1)}{n^2 f_n(1)}=\frac{C_3}{C_1}.
\end{align}

To find the desired relations of $C_1$, $C_2$ and $C_3$, we need tractable expressions of ${f_n'(1)}/{f_n(1)}$ and ${f_n''(1)}/{f_n(1)}$. Note that $f_n(1)$, $f_n'(1)$ and $f_n''(1)$ can not be represented in a closed form. So, suitable recurrence relations regarding $f_n(x)$, $f_n'(x)$ and $f_n''(x)$ play a key role in the remainder of our proof. 
For this purpose, we employ the recurrences stated in Lemma \ref{lem:rec-fn''2}, which gives a system of linear equations in the variables $f_n'(x)$ and $f_n''(x)$. That is,
\begin{equation}\label{rec:PBnx3-4-f}
\left\{
\begin{aligned}
  f_{n+1}(x)
=&\  a_{n}(x) f_{n}(x)
   +b_{n}(x) f_{n}'(x)
   +c_{n}(x) f_{n}''(x),\\
  f_{n}(x)
=&\  \widetilde{a}_{n}(x) f_{n+1}'(x)
   +\widetilde{b}_{n}(x) f_{n}'(x)
   +\widetilde{c}_{n}(x) f_{n}''(x),
\end{aligned}
\right.
\end{equation}
where $a_n(x)$, $b_n(x)$, $c_n(x)$, $\widetilde{a}_n(x)$, $\widetilde{b}_n(x)$ and $\widetilde{c}_n(x)$ are given in Lemma \ref{lem:rec-fn''2}.

Clearly, $b_n(1)$, $c_n(1)$, $\widetilde{b}_n(1)$ and $\widetilde{c}_n(1)$ are nonzero.
Setting $x=1$ in \eqref{rec:PBnx3-4-f}, then dividing by $f_n(1)$ gives
\begin{equation}\label{rec:PBnx3-1}
\left\{
\begin{aligned}
  \frac{f_{n+1}(1)}{f_n(1)}
=&\  a_{n}(1)
   +b_{n}(1) \frac{f_{n}'(1)}{f_{n}(1)}
   +c_{n}(1) \frac{f_{n}''(1)}{f_{n}(1)},\\
  1
=&\  \widetilde{a}_{n}(1) \frac{f_{n+1}'(1)}{f_{n}'(1)}\frac{f_{n}'(1)}{f_{n}(1)}
   +\widetilde{b}_{n}(1) \frac{f_{n}'(1)}{f_{n}(1)}
   +\widetilde{c}_{n}(1) \frac{f_{n}''(1)}{f_{n}(1)}.
\end{aligned}
\right.
\end{equation}
Solving \eqref{rec:PBnx3-1}, we get
\begin{equation}\label{resolv-musigma}
\left\{
\begin{aligned}
  \frac{f_{n}'(1)}{f_{n}(1)}
=&\ \frac{\displaystyle 1-\frac{\widetilde{c}_n(1)}{c_n(1)}\left(\frac{f_{n+1}(1)}{f_n(1)}-a_n(1)\right)}
    {\displaystyle \widetilde{a}_n(1)\frac{f_{n+1}'(1)}{f_{n}'(1)}+\widetilde{b}_n(1)
     -\frac{\widetilde{c}_n(1)b_n(1)}{c_n(1)}},\\[5pt]
  \frac{f_{n}''(1)}{f_{n}(1)}
=&\ \frac{\displaystyle \frac{f_{n+1}(1)}{f_n(1)}
  - a_n(1)-b_n(1)\frac{f_{n}'(1)}{f_{n}(1)}}{c_{n}(1)}.
\end{aligned}
\right.
\end{equation}

We are now able to compute the left-hand side of \eqref{eq:lim-C2/C1} with the expression of ${f_{n}'(1)}/{f_{n}(1)}$ given by \eqref{resolv-musigma} and the limitations of ${f_{n+1}(1)}/{f_n(1)}$ and ${f_{n+1}'(1)}/{f_n'(1)}$ at $+\infty$ given in Lemma \ref{lme:4.2}.
It follows from \eqref{eq:lim-C2/C1}, \eqref{resolv-musigma} and \eqref{eq:lim-fn1n&fn1'n'} that
\begin{align}\label{eq:val-C2/C1}
 \frac{C_2}{C_1}=\lim_{n\rightarrow +\infty} \frac{f_n'(1)}{nf_n(1)}
=&\, \lim_{n\rightarrow +\infty}\frac{\displaystyle
    1-\frac{\widetilde{c}_n(1)}{c_n(1)}\left(\frac{f_{n+1}(1)}{f_n(1)}-a_n(1)\right)}
    {\displaystyle n\left(\widetilde{a}_n(1)\frac{f_{n+1}'(1)}{f_{n}'(1)}+\widetilde{b}_n(1)
     -\frac{\widetilde{c}_n(1)b_n(1)}{c_n(1)}\right)}\nonumber\\
=&\, \lim_{n\rightarrow +\infty}\frac{\displaystyle
   1-\frac{\widetilde{c}_n(1)}{c_n(1)}(8-a_n(1))}
    {\displaystyle 8n\widetilde{a}_n(1)+n\widetilde{b}_n(1)
     -n\frac{\widetilde{c}_n(1)b_n(1)}{c_n(1)}}\nonumber\\
=&\, \lim_{n\rightarrow +\infty}
     \frac{9n^2+41n+48}{6n(3n+8)}\nonumber\\
=&\, \frac{1}{2},
\end{align}
and hence,
\begin{align}\label{eq:fn'1/fn1-eqo}
 \mu_n=\frac{f_n'(1)}{f_n(1)}
 \sim \frac{n}{2}.
\end{align}

It follows from \eqref{eq:lim-C3/C1}, \eqref{resolv-musigma}, \eqref{eq:lim-fn1n&fn1'n'} and \eqref{eq:fn'1/fn1-eqo} that
\begin{align}\label{eq:val-C3/C1}
 \frac{C_3}{C_1}=\lim_{n\rightarrow +\infty} \frac{f_n''(1)}{n^2 f_n(1)}
=&\, \lim_{n\rightarrow +\infty} \frac{\displaystyle \frac{f_{n+1}(1)}{f_n(1)}
  - a_n(1)-b_n(1)\frac{f_{n}'(1)}{f_{n}(1)}}{n^2 c_{n}(1)}\nonumber\\
=&\, \lim_{n\rightarrow +\infty}
      \frac{\displaystyle 8-11+\frac{12n}{(n+3)(n+4)}\cdot
      \frac{n}{2}}{12}\nonumber\\
=&\, \frac{1}{4}.
\end{align}

By \eqref{eq:val-C2/C1} and \eqref{eq:val-C3/C1},
\begin{align}\label{eq:C123}
C_1=2C_2=4C_3.
\end{align}
Clearly,
$$
C_1 C_3=C_2^2,
$$
and
\begin{align*}
 C_1C_3+3C_1C_2-6C_2^2=C_2^2+6C_2^2-6C_2^2=C_2^2>0.
\end{align*}
which leads to \eqref{re:coe2=0&coe1>0}. Moreover, by \eqref{asyp-sigma_n^2}, \eqref{re:coe2=0&coe1>0} and \eqref{eq:C123},
\begin{align*}
 \sigma_n^2
\sim \left(\frac{C_3}{3C_1}+\frac{C_2}{C_1}-2\frac{C_2^2}{C_1^2}\right)n
= \frac{n}{12}.
\end{align*}
This completes the proof.
\end{proof}


\begin{thebibliography}{alpha}

\bibitem{AcBaPi2006}
{\sc E. Ackerman, G. Barequet, R.Y. Pinter},
A bijection between permutations and floorplans, and its applications,
Discrete Appl. Math. 154 (12) (2006), 1674--1684.

\bibitem{ABBGS}
 {\sc J.-C. Aval,  A. Boussicault, M. Bouvel, O. Guibert, M. Silimbani},
 Baxter tree-like tableaux,
 Australas. J. Comb. 86(1) (2023), 24--75.

\bibitem{Baxter1964}
{\sc G. Baxter},
On fixed points of the composite of commuting functions,
Proc. Amer. Math. Soc. 15(6) (1964), 851--855.

\bibitem{Bender1973}
{\sc E.A. Bender},
Central and local limit theorems applied to asymptotic enumeration,
J. Combin. Theory Ser. A 15 (1973), 91--111.

\bibitem{Birkhoff1930}
{\sc G.D Birkhoff},
Formal theory of irregular difference equations,
Acta Math. 54 (1930), 205--246.

\bibitem{Birkhoff-Trjitzinsky1933}
{\sc G.D. Birkhoff, W.J. Trjitzinsky},
Analytic theory of singular difference equations,
Acta Math. 60(1) (1933), 1--89.

\bibitem{Bona2009}
{\sc M. B\'{o}na},
Real zeros and normal distribution for statistics on Stirling permutations defined by Gessel and Stanley,
SIAM J. Discrete Math. 23(1) (2009),  401--406.

\bibitem{BBF2010}
{\sc N. Bonichon, M. Bousquet-M\'elou, \'{E}. Fusy},
Baxter permutations and bipolar orientations,
S\'eminaire Lotharingien de Combinatoire 61A, 2010, Article B61Ah.

\bibitem{Branden-2006}
{\sc P. Br\"{a}nd\'{e}n},
On linear transformations preserving the P\'{o}lya frequency property,
Trans. Amer. Math. Soc. 358(8) (2006), 3697--3716.

\bibitem{Canfield}
{\sc E.R. Canfield},
Central and local limit theorems for the coefficients of polynomials of binomial type,
J. Combin. Theory Ser. A 23 (1977), 275--290.

\bibitem{Chatel-Pilaud2017}
{\sc G. Chatel, V. Pilaud},
Cambrian Hopf algebras,
Adv. Math. 311 (2017), 598--633.

\bibitem{CHM-2012}
{\sc W.Y.C. Chen, Q.-H. Hou, Y.-P. Mu},
The extended Zeilberger algorithm with parameters,
J. Symbolic Comput. 47(6) (2012), 643--654.

\bibitem{CMW-2020}
{\sc X. Chen, J. Mao, Y. Wang},
Asymptotic normality in $t$-stack sortable permutations,
Proc. Edinb. Math. Soc. (2), 63(4) (2020), 1062--1070.

\bibitem{CYZ-2022}
{\sc X. Chen, A.L.B. Yang, J.J.Y. Zhao},
Recurrences for Callan's generalization of Narayana polynomials,
J. Syst. Sci. Complex. 35 (2022), 1573--1585.

\bibitem{CWZ-2020}
{\sc X. Chen, Y. Wang, S.-N. Zheng},
Analytic properties of combinatorial triangles related to Motzkin numbers,
Discrete Math. 343 (2020), 112133.

\bibitem{CEF2005}
{\sc T.Y. Chow, H. Eriksson, C.K. Fan},
Chess tableaux,
Electronic J. Combin. 11(2) (2005), \#A3.

\bibitem{CGHK1978}
{\sc F.R.K. Chung, R.L. Graham, V.E. Hoggatt, Jr., M. Kleiman},
The number of Baxter permutations,
J. Combin. Theory Ser. A  24(3) (1978), 382--394.

\bibitem{CDV1986}
{\sc R. Cori, S. Dulucq, G. Viennot},
Shuffle of Parenthesis Systems and Baxter Permutations,
J. Combin. Theory Ser. A 43 (1986), 1--22.

\bibitem{CFLM2018}
{\sc J. Courtiel, E. Fusy, M. Lepoutre, M. Mishna},
Bijections for Weyl Chamber walks ending on an axis, using arc diagrams and Schnyder woods, European J. Combin. 69 (2018), 126--142.

\bibitem{DJM-F2007}
{\sc K. Driver, K. Jordaan, A. Mart\'{i}nez-Finkelshtein},
P\'{o}lya frequency sequences and real zeros of some $_3F_2$ polynomials,
J. Math. Anal. Appl. 332 (2007), 1045--1055.

\bibitem{DG1996}
{\sc S. Dulucq, O. Guibert},
Stack words, standard permutations, and Baxter permutations,
Discrete Math.  157 (1996),  91--106.

\bibitem{DG1998}
{\sc S. Dulucq, O. Guibert},
Baxter permutations,
Discrete Math.  180 (1998), 143--156.

\bibitem{FFNO2011}
{\sc S. Felsner, \'{E}. Fusy, M. Noy, D. Orden},
Bijections for Baxter families and related objects,
J. Combin. Theory Ser. A 118 (3) (2011), 993--1020.

\bibitem{Giraudo2012}
{\sc S. Giraudo},
Algebraic and combinatorial structures on pairs of twin binary trees,
J. Algebra 360 (2012), 115--157.

\bibitem{Harper1967}
{\sc L.H. Harper},
Stirling behavior is asymptotically normal,
Ann. Math. Statist. 38 (1967), 410--414.

\bibitem{Hou-Zhang2019}
{\sc Q.-H. Hou, Z.-R. Zhang},
Asymptotic $r$-log-convexity and $P$-recursive sequences,
J. Symb. Comput. 93 (2019), 21--33.

\bibitem{Kauers2011}
{\sc M. Kauers},
\href{https://www3.risc.jku.at/research/combinat/software/ergosum/RISC/Asymptotics.html}{A Mathematica package for computing asymptotic expansions of solutions of $P$-finite recurrence equations}, Technical Report RISC 11-04, 2011.

\bibitem{KRY2019}
{\sc J.P.S. King, G.-C. Rota, C.H. Yan},
Combinatorics: The Rota way,
Cambridge University Press, Cambridge, 2009.

\bibitem{Koutschan2009}
{\sc C. Koutschan},
Advanced Applications of the Holonomic Systems Approach, Doctoral Thesis, Research Institute for Symbolic Computation (RISC), Johannes Kepler Uiversity, Linz, Austria, 2009.

\bibitem{Law-Reading2012}
{\sc S. Law, N. Reading},
The Hopf algebra of diagonal rectangulations,
J. Combin. Theory Ser. A 119(3) (2012), 788--824.

\bibitem{LWW-2023}
{\sc H. Liang, Y. Wang, Y. Wang},
Analytic aspects of generalized central trinomial coefficients,
J. Math. Anal. Appl. 527(1) (2023), 127424.

\bibitem{Mallows1979}
{\sc C.L. Mallows},
Baxter permutations rise again,
J. Combin. Theory Ser. A  27(3) (1979), 394--396.

\bibitem{Ore1933}
{\sc O. Ore},
Theory of non-commutative polynomials,
Ann. Math. 34 (1933), 480--508.

\bibitem{Poincare1885}
{\sc H. Poincar\'{e}},
Sur les Equations Lin\'{e}aires aux Diff\'{e}rentielles Ordinaires et aux Diff\'{e}rences Finies,
Amer. J. Math. 7 (1885), 203--258.

\bibitem{Rainville1960}
{\sc E.D. Rainville},
Special Functions, The Macmillan Company, New York, 1960.

\bibitem{Reading2005}
{\sc N. Reading},
Lattice congruences, fans and Hopf algebras,
J. Combin. Theory Ser. A 110(2) (2005), 237--273.

\bibitem{oeis}
{\sc N.J.A. Sloane},
The On-line Encyclopedia of Integer Sequences. https://oeis.org/A001181

\bibitem{Viennot1981}
{\sc G. Viennot},
A bijective proof for the number of Baxter permutations,
Troisi\`eme S\'eminaire Lotharingien de Combinatoire, Le Klebach (1981), 28--29.

\bibitem{Stanley}
{\sc R.P. Stanley},
Enumerative Combinatorics, vol. 2,
Cambridge University Press, Cambridge, 1999.

\bibitem{Wimp-Zeilberger1985}
{\sc J. Wimp, D. Zeilberger},
Resurrecting the asymptotics of linear recurrences,
J. Math. Anal. Appl. 111(1) (1985), 162--176.

\bibitem{Yang}
{\sc A.L.B. Yang},
The descent polynomial of Baxter permutations, Personal communication, 2023.

\bibitem{YCCG2003}
{\sc B. Yao, H. Chen, C.-K. Cheng, R.L. Graham},
Floorplan representations: Complexity and connections,
ACM Transactions on Design Automation of Electronic Systems  8(1) (2003), 55--80.

\bibitem{Zeilberger1991}
{\sc D. Zeilberger},
The method of creative telescoping,
J. Symbolic Comput. 11 (1991), 195--204.

\bibitem{Zeilberger2016}
{\sc D. Zeilberger},
\href{https://sites.math.rutgers.edu/~zeilberg/mamarim/mamarimhtml/asy.html}
{AsyRec: a Maple package for computing the asymptotics of solutions of linear recurrence equations with polynomial coefficients},
The Personal Journal of Shalsoh B. Ekhad and Doron Zeilberger, 2016.

\end{thebibliography}
\end{document}